     \newcommand{\BR}{{\mathbb {R}}}
    \renewcommand{\Re}{{\mathfrak{Re}}}
\def\-{^{-1}}
\def\-{^{-1}}
\newcommand{\delete}[1]{}
    \theoremstyle{plain}
    \newtheorem{thm}{Theorem}[section] 
    \newtheorem{lem}[thm]{Lemma}  \newtheorem{prop}[thm]{Proposition}
    \numberwithin{equation}{section}
\begin{document}
\title{Subconvexity bound for  $\rm GL(2)$ L-functions: $t$--aspect} 
\author{Keshav Aggarwal and Saurabh Kumar Singh}

\address{Dept. of Mathematics, The Ohio State University, 100 Math Tower,
231 West 18th Avenue, Columbus, OH 43210, USA}
\email{aggarwal.78@osu.edu}

\address{ Stat-Math Unit,
Indian Statistical Institute, 
203 BT Road,  Kolkata-700108, INDIA.}

\email{skumar.bhu12@gmail.com}

\subjclass[2010]{Primary 11F66, 11M41; Secondary 11F55}
\date{\today}

\keywords{Maass forms, Hecke eigenforms, Voronoi summation formula, Poisson summation formula.}

\begin{abstract}
Let $f $  be a holomorphic Hecke eigenforms  or a Hecke-Maass cusp form for the full modular group $\rm SL(2, \mathbb{Z})$. In this paper we shall use circle method to prove the Weyl exponent for $\rm GL(2)$ $L$-functions. We shall prove that 

\[
 L \left( \frac{1}{2} + it \right) \ll_{f, \epsilon} \left( 1 + |t|\right)^{1/3 + \epsilon}, 
\]  for any $\epsilon > 0.$
\end{abstract}
\maketitle 

%\tableofcontents

\section{ Introduction }

Estimating the central values of $L$-functions is one of the most important problems in modern number theory. In this paper we shall deal with the $t$-aspect of subconvexity bound for $\rm GL(2)$ $L$-functions. Let $f $  be a holomorphic Hecke eigenform, or a Maass cusp form for the full modular group $\rm SL(2, \mathbb{Z})$ with normalised Fourier coefficients $\lambda_f(n)$. The $L$-series associated to  $f$ is given by
\[
L(s, f)= \sum_{n=1}^\infty \frac{\lambda_f(n)}{n^s} \ =  \prod_p \left( 1 -\lambda_f(p) p^{-s} + p^{-2s} \right)^{-1} \ \ \ (\Re s>1).
\]
It is well known that the series $ L(s, f)$ extends to an entire function and satisfies a functional equation relating $L(s, f)$ to $L(1-s, f)$. The convexity problem in $t$-aspects deals with the size of $L(s, f)$ on the central line $\Re s = 1/2$.  The functional equation together with the Phragm{\' e}n–-Lindel{\"o}f principle and an asymptotic formula for the gamma functions gives us the convexity bound, or the trivial bound,  $L(1/2+ it, f)\ll t^{1/2+ \epsilon}$. The subconvexity bound problem is to obtain a bound of the form $L(1/2+ it, f) \ll t^{1/2 -\delta}$ for some $\delta>0$. In this paper we shall prove the following theorem:

\begin{thm} \label{main thm}
Let $f $  be either a holomorphic Hecke eigenform or a Maass cusp form for the full modular group $\rm SL(2, \mathbb{Z})$. On $\Re s= 1/2$, we have the bound
\[
L\left( \frac{1}{2} + it, f \right) \ll (|t|+2)^{ 1/3 +\epsilon} ,
\] for any $\epsilon >0$. 
\end{thm}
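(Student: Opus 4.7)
The proof begins with a standard approximate functional equation and a dyadic decomposition, which reduce matters to estimating the smooth sum
\[
 S(N) \;=\; \sum_{n\geq 1}\lambda_f(n)\, V(n/N)\, n^{-it}
\]
for a smooth, compactly supported weight $V$ and $N$ running over dyadic scales up to $t^{1+\epsilon}$. The trivial Rankin--Selberg bound $|S(N)|\ll N^{1+\epsilon}$ recovers the convexity estimate; to obtain the Weyl exponent $1/3$ it suffices to establish $|S(N)|\ll N^{1/2}\,t^{1/3+\epsilon}$ in the critical range $N\asymp t$, i.e.\ to extract a saving of $t^{1/6}$ over the trivial bound.

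The plan is the conductor-lowering circle method. First I introduce an auxiliary variable $m$ and write
\[
 S(N) \;=\; \sum_{n,m\geq 1}\lambda_f(n)\, U(n/N)\, W(m/N)\, m^{-it}\, \delta(n-m),
\]
so that the oscillation $n^{-it}$ is transferred off the Hecke eigenvalues. I then detect $\delta(n-m)$ by the DFI $\delta$--symbol, introducing an average over moduli $q\leq Q$ (with $Q$ a free parameter to be optimised), an additive twist $e(a(n-m)/q)$, and a smooth integral of length $\ll 1$. Next I dualise both inner sums: $\GL(2)$ Voronoi converts the $n$--sum into a dual sum $\sum_{n^\ast}\lambda_f(n^\ast)e(-\bar a n^\ast/q)\tilde V(n^\ast)$ with the new variable localised near $n^\ast\asymp N/Q^2$, while Poisson summation on the $m$--sum, combined with stationary phase against $m^{-it}$ (which has frequency $t/m\asymp 1$), converts it into a very short dual sum of length $\ll t^\epsilon$ carrying an explicit oscillatory weight. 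After executing the resulting $a$--sum (producing a Kloosterman-type sum), the original expression becomes, schematically,
\[
 S(N) \;\approx\; \frac{N^{1/2}}{Q}\sum_{q\leq Q}\frac{1}{q}\sum_{m^\ast}\sum_{n^\ast\ll N/Q^2}\lambda_f(n^\ast)\,\mathfrak{C}(n^\ast,m^\ast;q)\,\mathfrak{I}(n^\ast,m^\ast;q),
\]
where $\mathfrak{C}$ is a complete character sum modulo $q$ and $\mathfrak{I}$ is an oscillatory integral of bounded size.

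I then apply the Cauchy--Schwarz inequality in the $(q,n^\ast)$ variables, keeping $\lambda_f(n^\ast)$ inside the square; the Rankin--Selberg bound absorbs the Hecke eigenvalues at a cost of $t^\epsilon$. Opening the square and applying one further Poisson summation in $n^\ast$ collapses the off-diagonal contribution to complete exponential sums in the composite modulus $qq'$, which are handled by Weil's bound. The diagonal $q=q'$ is estimated directly; balancing diagonal and off-diagonal against each other fixes the optimum $Q\asymp t^{1/3}$ and yields precisely the saving $t^{1/6}$ that is needed.

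The principal obstacle will be the off-diagonal analysis after Cauchy--Schwarz: one must extract genuine square-root cancellation from the complete character sums modulo $qq'$, and the weights surviving the various stationary-phase approximations must be controlled uniformly in $q$, $n^\ast$ and $m^\ast$. In addition, degenerate frequencies such as $n^\ast=0$ or $m^\ast=0$ give rise to zero-frequency terms that have to be isolated and estimated separately so as to fit within the $t^{1/6}$ budget; if any of these pieces resists square-root cancellation, the Weyl exponent will not close up, and this is the delicate point on which the whole argument turns.
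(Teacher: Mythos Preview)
Your route is genuinely different from the paper's. The paper does not use the DFI $\delta$-symbol, and more importantly it does not separate $n$ from $m$ directly: it first inserts Munshi's conductor-lowering integral,
\[
S(N)=\frac{1}{K}\int_{\BR} V(v/K)\mathop{\sum\sum}_{n=m}\lambda_f(n)\,(n/m)^{iv}\,m^{-it}\,V(n/N)\,U(m/N)\,dv,
\]
and only then applies Kloosterman's circle method with $Q=(N/K)^{1/2}$. The extra oscillation $n^{iv}$ raises the analytic conductor of the $n$-sum to $K$, so after Voronoi the dual $n$-variable has length $\asymp K$ rather than $N/Q^{2}$; the stationary-phase analysis of the $v$-integral then supplies a further $K^{1/2}$ saving. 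After Cauchy--Schwarz and Poisson in the dual $n$-variable one balances the diagonal saving $K^{1/2}$ against the off-diagonal saving $t/K$, which forces $K=t^{2/3}$ and hence $Q\asymp t^{1/6}$, not $t^{1/3}$. The paper explicitly remarks that without the refined stationary-phase bounds one only reaches $t^{3/8}$, so the $v$-integral is not decorative.

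Your sketch also misreads the arithmetic structure that actually arises. Poisson on the $m$-sum does not leave a free $a$-sum to be turned into a Kloosterman sum: the congruence $m^{\ast}\equiv\bar a\pmod q$ pins down $a$ uniquely, so the $a$-sum is absorbed and only the single additive character $e(-\overline{m^{\ast}}\,n^{\ast}/q)$ survives. After the final Cauchy--Poisson the character sum collapses to a congruence $a_{1}q-aq_{1}+n\equiv 0\pmod{qq_{1}}$, handled by counting; Weil's bound is never invoked. A DFI-based approach without the $v$-integral can in fact be pushed to $t^{1/3}$ (this was carried out in later work), but the mechanism is the same congruence-counting, not Kloosterman/Weil, and as written your off-diagonal plan would not close.
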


This bound was first established by Anton Good \cite{GOOD} for holomorphic forms, and later extended by M. Jutila \cite{MJ1} in the case of Maass forms. In this paper, we shall prove this bound by yet another way. We briefly recall the history of the $t$-aspect subconvexity bound for $L$-functions. The convexity bound for   the Riemann zeta function is given by 
\begin{equation} \label{conv for zeta}
\zeta \left(1/2 + it \right) \ll t^{1/4 + \epsilon}, \quad \text{ for } \epsilon> 0.
\end{equation}
% and the same for a primitive Dirichlet character $\chi$ of modulus $q$ is given by
% 
% \begin{equation} \label{con for chi}
% L\left(  \frac{1}{2}, \chi \right) \ll q^{1/4 + \epsilon}
% \end{equation}
  Lindel{\" o}f hypothesis asserts that the exponent $1/4 + \epsilon$ can be replace by $\epsilon$. Subconvexity bound for $\zeta(s)$ was first proved by Hardy and Littlewood, and by Weyl independently. Establishing a bound for certain exponential sums, Weyl (see \cite{HW}  and also \cite[page 99, Theorem 5.5]{ECT}) proved that
  \begin{equation} \label{weyl}
   \zeta(1/2 + it) \ll t^{1/6} \log^{3/2}t.
  \end{equation}
It was first written down by Landau in a slightly refined form, and has been generalized to all Dirichlet $L$-functions.  Since then it has been improved by several authors. The best known result for the exponent is $ 13/ 84 \approx 0.15476 $ due to J. Bourgain \cite{BJ}. Let $\chi$ be a Dirichlet character of conductor $q$. Using cancellations in character sums over short intervals, Burgess \cite{DB} proved that for square-free $q$, $L\left(1/2, \chi \right) \ll_\epsilon q^{3/16 + \epsilon}$. Heath-Brown \cite{HB} proved a hybrid bound (uniformly in both the parameters, $q$ and $t$) of the same strength. 
    
Saving the $\log$-factors, the bound in Theorem \ref{main thm} is of the same strength as the bound \eqref{weyl} in the $\rm GL(2)$ setting. Therefore it is also known as the Weyl bound. For holomorphic forms, this was first proved by Good \cite{GOOD} using the spectral theory of automorphic functions. Jutila \cite{MJ} gave an alternate proof based only on the functional properties of $L(f, s)$ and $L(f\otimes \psi, s)$, where $\psi$ is an additive character. The arguments used in his proof were flexible enough to be adopted for the the Maass cusp forms, as shown by Meruman \cite{MERU1}, who proved the result for Maass cusp forms.  Good's mean value estimate  itself was extended by Jutila \cite{MJ1} to prove the Weyl bound for Maass cusp forms in yet another way.  
 
However, very little is known about the $t$-aspect subconvexity bound for $L$-functions of higher rank groups. Subconvexity bounds for the symmetric square lifts of $\rm SL(2, \mathbb{Z})$ forms or a self-dual Maass form for  $\rm SL(3, \mathbb{Z})$ is known due to the fundamental work of  X. Li \cite{XL}. Assuming $f$ to be a self-dual Hecke-Maass cusp form for $\rm SL(3, \mathbb{Z})$, she proved (see \cite[page 3, Corollary 1.2]{XL})
\begin{equation} \label{x li}
 L \left(1/2 + it, f \right) \ll_{\epsilon, f} \left( 1 + |t|\right)^{\frac{3}{4} - \frac{1}{16} + \epsilon},
\end{equation} 
for any $\epsilon > 0$. Later, using a different approach based on a conductor lowering mechanism (see equation \eqref{SN}), Munshi \cite{RM} obtained the same exponent for  general Hecke-Maass cusp forms for $\rm SL(3, \mathbb{Z})$. The aim of this paper is to adopt the method of \cite{RM} in the context of $\rm GL(2)$ $L$-functions. We aim to obtain the  Weyl exponent, which has previously obtained in the same context by A. Good \cite{GOOD} and M. Jutila \cite{MJ} via the more traditional route.  However, we note that the method of \cite{RM} does not easily extend to our context, and one needs to use a more refined stationary phase analysis (Lemma \ref{exponential inte}), and establish more refined bounds for some exponential integrals (subsection \ref{refine}). Without these refinements, one obtains the bound $t^{3/8+ \epsilon}$ in place of $t^{1/3+ \epsilon}$. 
 
To prove our theorem, we first use the following general result for an approximate functional equation of $L(f, s)$ (see \cite[page 98 Theorem 5.3]{IK1}).
\begin{thm}
Let $G(u)$ be any function which is holomorhpic and bounded in the strip $-4 < \Re u< 4 $, even, and normalised by $G(0)=1$. Then for $s$ in the strip $ 0\leq \Re s \leq 1$, we have

\begin{equation}
L(f, s)= \sum_n \frac{\lambda (n)}{n^s}  V_s \left( \frac{n}{X} \right) + \varepsilon(s, f) \sum_n \frac{\overline{\lambda } (n)}{n^{1-s} } V_{1-s} (nX),
\end{equation}
where $V_s(y)$ is a smooth function defined by
\[
V_s(y) = \frac{1}{2\pi i} \int_{(3)} y^{-u} G(u) \frac{\gamma(f, s+u)}{\gamma(f, s)} \frac{du}{u},
\] 
$\varepsilon(f)$ is the root number and 
\[
\varepsilon(f, s)= \varepsilon(f)  \frac{\gamma(f, 1-s)}{\gamma(f, s)}.
\]
\end{thm}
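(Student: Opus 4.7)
The plan is to start from the Mellin--Barnes integral
\[
I(s) := \frac{1}{2\pi i}\int_{(3)} L(f, s+u)\,\frac{\gamma(f, s+u)}{\gamma(f, s)}\,X^{u}\,G(u)\,\frac{du}{u},
\]
and evaluate it in two different ways. On the line $\Re u = 3$ we have $\Re(s+u) > 1$, so $L(f, s+u)$ admits an absolutely convergent Dirichlet series. Interchanging sum and integral, the resulting inner integral matches exactly the definition of $V_{s}(n/X)$, yielding
\[
I(s) = \sum_{n \geq 1} \frac{\lambda(n)}{n^{s}}\,V_{s}\!\left(\frac{n}{X}\right),
\]
which is the first sum in the claimed identity.

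Next I would shift the contour from $\Re u = 3$ to $\Re u = -3$. Writing the integrand in the form $\Lambda(f, s+u)\,\gamma(f, s)^{-1}\,X^{u}\,G(u)/u$, where $\Lambda(f, w) = \gamma(f, w) L(f, w)$ is the completed $L$-function, cuspidality of $f$ ensures $\Lambda(f, s+u)$ is entire; $G$ is holomorphic in $|\Re u| < 4$ by hypothesis; and the only singularity inside the rectangle is the simple pole of $1/u$ at $u = 0$, whose residue is $L(f, s)\,G(0) = L(f, s)$. Thus $I(s) = L(f, s) + I'(s)$, with $I'(s)$ the integral on $\Re u = -3$.

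To transform $I'(s)$ into the dual sum, I would substitute $u \mapsto -u$, which returns the contour to $\Re u = 3$ (the sign from $du$ cancels the orientation reversal), and use $G(-u) = G(u)$. Applying the functional equation $\Lambda(f, s-u) = \varepsilon(f)\,\Lambda(\overline{f}, 1 - s + u)$, expanding $L(\overline{f}, 1 - s + u)$ as an absolutely convergent Dirichlet series on $\Re u = 3$, and pulling out the factor $\varepsilon(f)\,\gamma(f, 1-s)/\gamma(f, s) = \varepsilon(s, f)$ produces
\[
I'(s) = -\,\varepsilon(s, f)\sum_{n \geq 1} \frac{\overline{\lambda}(n)}{n^{1-s}}\,V_{1-s}(nX).
\]
Rearranging $L(f, s) = I(s) - I'(s)$ then yields the stated identity.

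The main technical point to verify is the legitimacy of the contour shift, for which one needs the integrand to decay as $|\Im u| \to \infty$ uniformly in the strip $-3 \leq \Re u \leq 3$. This follows from Stirling: the ratio $\gamma(f, s+u)/\gamma(f, s)$ decays exponentially in $|\Im u|$ on any vertical strip, which easily dominates both the convexity bound for $L(f, s+u)$ and the assumed boundedness of $G$. No genuinely difficult step arises beyond this; the remainder of the argument is simply Mellin inversion combined with the functional equation.
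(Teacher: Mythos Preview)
Your argument is correct and is precisely the standard proof of the approximate functional equation. The paper does not actually prove this theorem; it is quoted as a known result from Iwaniec--Kowalski (\cite[Theorem~5.3]{IK1}), and the proof there follows the same Mellin--Barnes contour-shift strategy you outline. There is nothing to compare: your approach and the cited reference's approach coincide.
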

Using this approximate functional equation at $\Re s = 1/2$ and properties of $V_s(y)$ (see \cite[Proposition 5.4]{IK1}), we have 
\begin{equation} \label{subconvexity SN}
 L\left(1/2 + it, f \right) \ll t^\epsilon \sup_{N\leq t^{1+\epsilon}} \frac{|S(N)|}{N^{1/2}} + O_A \left( t^{-A} \right).
\end{equation} 
Here $S(N)$ is a dyadic sum  given by
\[
S(N) := \sum_{n=1}^\infty \lambda_f (n) n^{-it} V\left( \frac{n}{N} \right), 
\] 
where $V(x)$ is a smooth bump function supported on the interval $[1,2]$ and satisfies $x^j V^{(j)} (x) \ll_j 1.$ We normalize $V(x)$ so that $\int_{\mathbb{R}} V(y) dy = 1$.  We shall use the conductor lowering mechanism introduced by Munshi \cite{RM} to estimate the  sum $S(N)$. To that end, we introduce an extra integral with a parameter $K$, with $ t^\epsilon \ll K \ll t^{1 - \epsilon}$ to be chosen later and arrive at
 \begin{equation} \label{SN} 
 S(N) = \frac{1}{K} \int_\mathbb{R} V\left( \frac{u}{K} \right) \mathop{\sum \sum }^\infty_{ \substack{ m, n =1 \\ m=n}} \lambda_f(n) \left( \frac{n}{m} \right)^{i v}  m^{-it} V\left( \frac{n}{N} \right) U\left( \frac{m}{N} \right) du.
 \end{equation} 
 Here $U$ is a smooth bump function supported in the interval $ [3/4, 9/4]$, with $U(x) \equiv 1 $ for $x\in [1,2]$ and satisfies $ x^j U^{(j)} (x) \ll_j 1.$  We now use Kloosterman's version of the circle method (equation \eqref{circlemethod}) to detect equation $n-m=0$. For $n\in \mathbb{Z}$, let 
 \begin{equation} \label{deltan}
 \delta(n)= \begin{cases}
  1   \ \  \ \ \   \textrm{if }  \ \ \ \ \  n=0,\\ 
0   \ \ \ \     \textrm{otherwise} .
\end{cases}
\end{equation} 
For a real number $Q>0$, we have (see \cite[ page 470, Proposition 20.7]{IK1})
\begin{equation} \label{circlemethod}
\delta(n)= 2 \Re \int_0^1 \mathop{\sum  \sideset{}{^\star}\sum}_{1\leq q\leq Q < q \leq q+Q} \frac{1}{aq} e\left( \frac{n \overline{a}}{q}- \frac{n x}{ aq}\right). 
\end{equation}
We choose $Q=(N/K)^{1/2}$. Substituting the expression for $\delta(n)$ from \eqref{circlemethod} into \eqref{SN}, we obtain 
\[
S(N)= S^+(N) + S^-(N),
\] with
\begin{align} \label{splus}
S^{ \pm} (N) &= \frac{1}{K}  \int_0^1  \int_\mathbb{R} V\left( \frac{u}{K} \right) 
\mathop{\sum  \sideset{}{^\star}\sum}_{1\leq q\leq Q < q \leq q+Q} \frac{1}{aq} \mathop{\sum \sum }^\infty_{ \substack{ m, n =1}} \lambda_f(n) n^{iv} m^{-i(t+v)} \notag\\
 & \hspace{1cm} \times e\left( \pm \frac{(n-m) \overline{a}}{q} \mp \frac{(n-m) x}{ aq}\right) V\left( \frac{n}{N} \right) U\left( \frac{m}{N} \right)dv  \ dx. 
\end{align} 
In the  rest of the  paper we shall estimate the sum $S^{+}(N)$ since estimates on the sum $S^{-}(N)$ are similar. We shall establish the following bound to prove Theorem \ref{main thm}.

\begin{prop}\label{main prop}
Let $S^{ \pm} (N)$ be given by equation \eqref{splus}. We have
\begin{align*}
S^{ \pm} (N) \ll \begin{cases}
 N^{1+\epsilon} \ \ \ \textrm{if} \ \ \ \  N \ll t^{2/3+\epsilon} \\
  N^{1/2}t^{1/2+\epsilon} \left(\frac{N^{1/2}}{K} + \frac{1}{N^{1/8}K^{1/8}} + \frac{K^{1/2}}{t^{1/2}}\right)  \ \ \ \textrm{if} \ \ \ \ t^{2/3+\epsilon}\ll N \ll t^{1+\epsilon}, \quad N^{3/5}< K < N^{1-\epsilon}.
\end{cases}
\end{align*}

\end{prop}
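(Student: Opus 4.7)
The plan is to split according to the size of $N$. For $N \ll t^{2/3+\epsilon}$ the trivial bound suffices: the Rankin--Selberg estimate $\sum_{n\asymp N}|\lambda_f(n)| \ll N^{1+\epsilon}$ together with the absolute convergence of the delta-symbol expansion \eqref{circlemethod} gives $|S^\pm(N)| \ll N^{1+\epsilon}$ after estimating the $v$- and $x$-integrals trivially (using repeated integration by parts in $v$ to truncate at $|v|\ll K^{1+\epsilon}$).

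For the main range $t^{2/3+\epsilon}\ll N\ll t^{1+\epsilon}$ I would follow a Munshi-type scheme with conductor lowering. First, apply the $\GL(2)$ Voronoi summation formula to the $n$-sum against the additive character $e(n\bar a/q)$; this dualises $n$ to $n'$, producing a new coefficient $\lambda_f(n')$, a Kloosterman-type twist $e(-n'a/q)$, and a Bessel-type integral transform whose size is controlled by the refined stationary-phase statement of Lemma \ref{exponential inte}. Second, apply Poisson summation to the $m$-sum modulo $q$; the essential point is that the extra $v$-integration with weight $V(v/K)$ combined with the phases $m^{-i(t+v)}n^{iv}$ enforces $|\log(m/n)|\ll K^{-1+\epsilon}$ after stationary phase in $v$, so the effective conductor on the $m$-side collapses from $q(t+K)$ to $qK$ --- this is the conductor-lowering mechanism of \cite{RM}. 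Evaluation of the resulting weighted integrals uses the sharper exponential-integral bounds of subsection \ref{refine}. Finally, apply Cauchy--Schwarz in $n'$ to remove the Hecke coefficients, open the square, and execute one further Poisson summation over $n'$ modulo the combined moduli. The diagonal contribution then produces the first term $N^{1/2}/K$ in the stated bound; the generic off-diagonal, after a stationary-phase analysis of the surviving $(x,v)$-integrals, produces the middle term $1/(N^{1/8}K^{1/8})$; and the trivial estimate for the zero-frequency $x$-integral yields the last term $K^{1/2}/t^{1/2}$.

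The main obstacle is the middle term $1/(N^{1/8}K^{1/8})$. A direct transplantation of the $\GL(3)$ argument of \cite{RM} to the $\GL(2)$ setting yields only a weaker estimate, equivalent to the exponent $3/8$ mentioned in the introduction, because the $\GL(2)$ Voronoi transform is less oscillatory than its $\GL(3)$ analogue and the stationary points of the final exponential integrals degenerate at higher order, so that a crude first-order stationary-phase analysis is insufficient. Lemma \ref{exponential inte} and the refinements of subsection \ref{refine} are designed precisely to extract this additional cancellation. Once the three-term bound is in place, balancing at $K\asymp t^{2/3}$ (which lies in the admissible range $N^{3/5}<K<N^{1-\epsilon}$ whenever $N\gg t^{2/3+\epsilon}$) gives $S^\pm(N)\ll N^{1/2}t^{1/3+\epsilon}$, and substituting into \eqref{subconvexity SN} produces Theorem \ref{main thm}.
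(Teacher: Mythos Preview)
Your overall four-step architecture (dualise both sums, stationary phase in $v$, Cauchy--Schwarz plus Poisson on the dual Hecke sum) matches the paper, but several load-bearing details are wrong in ways that would derail an actual write-up.

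First, the order of the two summation formulae is reversed: the paper applies Poisson to the $m$-sum \emph{first} (Section~\ref{start proof}), obtaining the congruence $m\equiv\bar a\pmod q$ that kills the $a$-sum, and only then applies Voronoi to the $n$-sum with character $e(nm/q)$. Doing Voronoi first as you propose still leaves the full $a$-sum in play and changes the shape of all subsequent integrals; it may be workable, but it is not the paper's route and you give no indication of how to handle the extra $a$-variable.

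Second, your description of the conductor-lowering mechanism is incorrect. The $m$-side conductor is $q|t+v|\asymp qt$ throughout; it never collapses to $qK$. The point of the $v$-integral is rather that it lets one take $Q=(N/K)^{1/2}$ in the delta method instead of $N^{1/2}$ (heuristically because $\int V(v/K)(n/m)^{iv}dv$ already localises $|n-m|\ll N/K$). This smaller $Q$ is what makes the dual $n$-length after Voronoi come out to $\asymp K$, which is precisely the length needed for the final Cauchy--Poisson step to gain.

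Third, your attribution of the three terms in the bound is scrambled. In the paper the diagonal $n=0$ contribution after the last Poisson yields the term $K^{1/2}/t^{1/2}$ (Lemma~\ref{lemma S0} feeding into \eqref{S1 plus N C}), the off-diagonal $n\neq 0$ yields $1/(N^{1/8}K^{1/8})$ (Lemma~\ref{lemma S1}), and the term $N^{1/2}/K$ comes from the accumulated stationary-phase \emph{error} $G_3=G_1-G_2$ carried through a separate Cauchy--Poisson (the piece the paper calls $S_2^+(N,C)$, equation~\eqref{S2}). There is no ``zero-frequency $x$-integral'' producing $K^{1/2}/t^{1/2}$. Getting these sources straight matters, because the refined estimates of subsection~\ref{refine} are applied specifically to sharpen the \emph{diagonal} term $\mathfrak{I}(0)$, not the off-diagonal, and this is where the improvement from $t^{3/8}$ to $t^{1/3}$ actually occurs.
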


Substituting the above bound into equation \eqref{subconvexity SN} and choosing $K= t^{2/3}$, we obtain 
\[
 L\left(1/2 + it, f \right) \ll t^\epsilon \sup_{N\leq t^{1+\epsilon}} \frac{|S(N)|}{N^{1/2}} + t^{-A} \ll t^{\frac{1}{3} + \epsilon}. 
\]

We observe that the trivial estimate is $ S^{ \pm} (N)\ll N^{2+\epsilon}$. To obtain the subconvexity bound as stated in the Theorem \ref{main thm}, we are require to save $N^{ 7/6}$ from  the sum $ S^{ \pm} (N)$. We shall briefly explain the method of the proof in the following steps. For simplicity, we assume that $t \asymp N$ and $q\asymp Q$ (where $\alpha\asymp A$ means there exist constants $0<c_1<c_2$ such that $c_1 A< |\alpha|< c_2A$).

{\bf Step 1- Poisson summation formula:} We start by applying Poisson summation formula to the $m$-sum. The initial length of the $m$-sum is of size  $N$. The {`analytic conductor'} for $m^{i(t+v)}$ has size $t$ and the {`arithmetic conductor'} has size $q$. Therefore roughly, the conductor for the $m$-sum has size $tQ$.  After the application of Poisson summation formula, we observe that up to an arbitrarily small error, the dual sum is of length $\ll tq/N$. The dual side also yields a congruence condition which determines $a\bmod q$ uniquely. The total saving after the first step is 
\[
\sqrt{\frac{N}{Qt/N}} \times Q^{1/2} = \frac{N}{\sqrt{t}}. 
\] 
We have used the stationary phase method for the resulting exponential integral to get this saving. After the first step, we have a sum of the form 
\[
\int_\mathbb{R} V(v) \sum_{q \asymp Q} \sum_{\substack{ m \asymp Qt/N \\ (m,q)=1}} \left( \frac{(t+v) aq}{ 2\pi e N( x-ma)}\right)^{-i(t+v)} \sum_{n\asymp N} \lambda_f(n)  n^{iv}e\left( \frac{nm}{q} \right)  e\left( -\frac{nx}{aq} \right)\ dv. 
\]

{\bf Step 2- Voronoi summation formula:} Next, we apply the Voronoi summation formula to the $n$-sum. The {`analytic conductor'} for $n^{iv}$ is of size $K$ and the {`arithmetic conductor'} for $e(nm/ q)$ is of the size $Q$, which gives us a total  conductor of size $KQ$. The dual sum has size $(QK)^2/ N$. So the saving in second step is $N/QK=\sqrt{N/K}$. To get this saving, we use the second derivative bound for certain exponential integrals (see subsection \ref{stat phase ana}). A trivial estimate after the second step give us $S^+(N) \ll \sqrt{NKt}$. 

{\bf Step 3- Integration over $v$:} We first simplify  some integral transforms (see Section \ref{statinary phase analysis})  by the stationary phase analysis. Trivially, the integration over $v$ has size $K$. Stationary phase analysis on the integration over $v$ gives a saving of size $\sqrt{K}$. We are left with a sum of the form
\[
\sum_{ n \asymp K} \lambda_f(n) \mathop{ \sum \sum }_{\substack { q\asymp Q, (m, q)=1 \\ |m| \asymp qt/N}} e\left(- \frac{mn}{q} \right) \int_{-K}^K n^{ - i \tau} \mathcal{J} ( q, m, \tau)  d \tau,
\] 
where $ \mathcal{J}(q, m, \tau)$ is a highly oscillatory function of size $O(1)$. Trivial estimate gives $S^+(N)\ll \sqrt{Nt} \asymp t$ (as $N\asymp t$) which would give the convexity bound.  To obtain an additional saving, we apply the  Cauchy--Schwarz inequality and Poisson summation formula to the $n$-sum. This is where the introduction of $K$ helps us beat the convexity bound.

{\bf Step 4-Cauchy inequality and Poisson summation:} We first apply the Cauchy--Schwarz inequality to the $n$-sum to get rid of the Fourier coefficients $\lambda_f(n)$. We the open the absolute value squared and interchange the order of summation. We then apply the Poisson summation formula to the $n$-sum. This saves $\sqrt{K}$ from the diagonal term and  $t/K$ from the off-diagonal term. The total saving is $\min \{\sqrt{K}, t/K\}$. By setting $\sqrt{K}=t/K$, the optimal choice for $K$ turns out to be $K= t^{2/3}$. Since the Cauchy--Schwarz inequality squares the amount we need to save, we observe that the saving in this step is of the size $ K^{1/4} \asymp  t^{1/6}$.  Hence we obtain
\[
S^+(N)\ll t^{\epsilon} \frac{\sqrt{Nt}}{t^{1/6}} \ll  \sqrt{N} t^{1/3 + \epsilon}.
\] 

\section{Preliminaries}
In this section we recall some basic facts about $\rm SL(2, \mathbb{Z})$ automorphic forms (for details, see \cite{HI} and \cite{IK1}). Our requirement is minimal,  in fact Voronoi summation formula  and Rankin-Selberg bound (see Lemma \ref{rankin Selberg bound}) is all that we use.

\subsection{Holomorphic cusp forms} 

Let $f $  be a holomorphic Hecke eigenform of weight $k$ for the full modular group $\rm SL(2, \mathbb{Z})$.  The   Fourier expansion of $f$  at $\infty$ is
$$ f(z)= \sum_{n=1}^\infty \lambda_f(n) n^{(k-1)/2} e(nz),$$
where $ e(z) = e^{2\pi i z}$ and $\lambda_f(n)$ are the normalized Fourier coefficients. Deligne proved that $|\lambda_f(n)| \leq d(n)$, where $d(n)$ is the divisor function. The $L$-function associated with a form $f$ is given by 
\[
L(s, f)= \sum_{n=1}^\infty \frac{\lambda_f(n)}{n^s} \ =  \prod_{p \text{ prime }} \left( 1 -\lambda_f(p) p^{-s} + p^{-2s} \right)^{-1} \ \ \ (\Re s>1). 
\]  
Hecke proved that $L(s, f)$ admits an analytic continuation to the whole complex plane, given by
\[
\Lambda(s, f) : = ( 2 \pi)^{-s} \Gamma \bigg( s + \frac{k-1}{2}\bigg) L( f, s ) = \pi^{-s} \Gamma\left( \frac{s + (k+1)/2}{2}\right)  \Gamma\left( \frac{ s + (k-1)/2}{2}\right)L( f, s ), 
\]
and satisfies the functional equation
\begin{align*} 
 \Lambda(s, f) = \varepsilon(f) \  \Lambda(1- s, \overline{f}),
\end{align*}
 $ \varepsilon(f)$ 
is  a root number and $\overline{f} $ is the dual cusp form. 
We now state the Voronoi summation formula for holomorphic cusp forms (see \cite{MERU}). 

\begin{lem} \label{voronoi}
Let $\lambda_f(n)$ be as above. Let $g$ be  a smooth function with compact support on the positive real numbers. Mellin transform of $ g$
is defined by $ \tilde{g} (s)= \int_o^\infty g(x) x^{s-1} dx.$ Let $a, q \in \mathbb{Z}$ with $(a, q)= 1$. We have
\begin{equation}
\sum_{n\geq 1} \lambda_f(n) e\left( \frac{a n}{q}\right) g(n) = q \sum_{n\geq 1} \frac{\lambda_f(n)}{n} e\left( -\frac{\overline{a} n}{q}\right) G \left( \frac{ n}{q^2}\right), 
\end{equation}
where $a \overline{a} \equiv 1 \bmod q$ and for $\sigma > -1 -(k+1)/2$, $ G(x)$ is given by
\begin{align*}
G(x) = i^{k-1} \frac{1}{2 \pi^2} \int_{(\sigma)} (\pi^2 x)^{-s}  \gamma(s, k) \tilde{g} (-s) \ \ ds, 
\end{align*} 
where $\gamma (s, k)$ is given by
\begin{align} \label{gamma s k}
\gamma(s, k) = \frac{\Gamma\left( \frac{1+ s + (k+1)/2}{2}\right)  \Gamma\left( \frac{1+ s + (k-1)/2}{2}\right)}{\Gamma\left( \frac{- s + (k+1)/2}{2}\right)  \Gamma\left( \frac{-s + (k+1)/2}{2}\right)}.
\end{align} 
 
\end{lem}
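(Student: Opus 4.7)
My strategy is to derive the Voronoi formula from the functional equation of the additively twisted $L$-series $L(s,f,a/q):=\sum_{n\ge 1}\lambda_f(n)e(an/q)n^{-s}$, together with Mellin inversion. Since $g$ is smooth and compactly supported on $(0,\infty)$, its Mellin transform $\tilde g(s)$ is entire and decays faster than any polynomial on vertical lines. For $\sigma_0>1$, Deligne's bound $|\lambda_f(n)|\le d(n)$ guarantees absolute convergence and allows interchange of sum and integral to write
\[
\sum_{n\ge 1}\lambda_f(n)e(an/q)\,g(n)\;=\;\frac{1}{2\pi i}\int_{(\sigma_0)}\tilde g(s)\,L(s,f,a/q)\,ds.
\]

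The main step is to establish the functional equation of $L(s,f,a/q)$. I would write
\[
(2\pi)^{-s-(k-1)/2}\,\Gamma\!\bigl(s+\tfrac{k-1}{2}\bigr)L(s,f,a/q)\;=\;\int_0^{\infty}f(a/q+iu)\,u^{s+(k-1)/2-1}\,du,
\]
split the integral at $u=1/q$, and use the modularity of $f$ under $\gamma=\left(\begin{smallmatrix}a & b\\ q & \bar a\end{smallmatrix}\right)\in \rm SL(2,\mathbb{Z})$, where $b=(a\bar a-1)/q$. A direct computation gives $\gamma(-\bar a/q+iy)=a/q+i/(q^{2}y)$ with automorphy factor $(qz+\bar a)^{k}|_{z=-\bar a/q+iy}=i^{k}q^{k}y^{k}$; the substitution $u\mapsto 1/(q^{2}u)$ converts the small-$u$ integral into a tail integral at the dual cusp $-\bar a/q$. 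Combining the two pieces (for $k$ even, which is the only relevant case on $\rm SL(2,\mathbb{Z})$) yields the functional equation
\[
\Bigl(\frac{q}{2\pi}\Bigr)^{\!s}\Gamma\!\bigl(s+\tfrac{k-1}{2}\bigr)L(s,f,a/q)\;=\;i^{k}\Bigl(\frac{q}{2\pi}\Bigr)^{\!1-s}\Gamma\!\bigl(1-s+\tfrac{k-1}{2}\bigr)L(1-s,f,-\bar a/q).
\]

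With the functional equation in hand, I shift the contour to $\Re s=-\sigma$ for $\sigma>(k+1)/2$; since $f$ is cuspidal no poles are crossed and $\tilde g$ is entire, so the shift is free. Applying the functional equation on the new line, expanding $L(1-s,f,-\bar a/q)$ as an (absolutely convergent) Dirichlet series, substituting $s\mapsto -s$, and interchanging sum and integral, I obtain
\[
q\sum_{n\ge 1}\frac{\lambda_f(n)}{n}\,e\!\Bigl(-\frac{\bar a n}{q}\Bigr)\cdot\frac{1}{2\pi i}\int_{(\sigma)}\tilde g(-s)\,(\pi^{2}n/q^{2})^{-s}\,H(s,k)\,ds,
\]
where $H(s,k)=\Gamma(1+s+(k-1)/2)/\Gamma(-s+(k-1)/2)$ up to elementary constants. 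Two applications of Legendre's duplication formula convert $H(s,k)$ into the stated product of four half-integer-shifted gammas $\gamma(s,k)$, and the residual powers of $2$, $\pi$ and $i$ combine to produce the factor $i^{k-1}/(2\pi^{2})$ in the definition of $G$, which matches the statement.

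The principal technical obstacle is bookkeeping. The modular transformation must be carried out with the correct branches of the $i^{k}$- and $q^{s}$-factors, and the single-quotient gamma factor produced by the functional equation must be reconciled with the four-factor quotient $\gamma(s,k)$ via two applications of duplication; every $(2\pi)^{s}$ and $q^{s}$ power has to line up exactly. The analytic bookkeeping, namely justifying the interchange of sum and integral on the shifted line through polynomial bounds on $\tilde g(-s)\gamma(s,k)$, is routine since $g$ has compact support, but should be verified explicitly to close out the argument.
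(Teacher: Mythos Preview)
Your argument is correct in outline and would yield the lemma, but it is a genuinely different route from the paper's. The paper does not rederive the Voronoi formula at all: it simply cites Miller--Schmid \cite{MILLER} for the formula in a slightly different shape and then applies the reflection identity $\Gamma(s)\Gamma(1-s)=\pi\csc(\pi s)$ to rewrite the gamma factor as the quotient $\gamma(s,k)$. By contrast, you give a self-contained derivation: Mellin inversion, the functional equation of the additively twisted $L$-series $L(s,f,a/q)$ obtained from the modular transformation $z\mapsto \gamma z$ with $\gamma=\left(\begin{smallmatrix} a & b\\ q & \bar a\end{smallmatrix}\right)$, a contour shift, and then Legendre duplication to split the single gamma quotient into the four-factor form. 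Your approach is more informative (it explains where the $q^{2s}$, the $i^{k}$, and the twist $-\bar a/q$ come from), while the paper's is shorter and shifts the burden to the cited reference; both are standard. Two small points worth tightening if you write this out: first, the denominator of $\gamma(s,k)$ as printed in the statement has $(k+1)/2$ twice, which is a typo in the paper --- your duplication computation will naturally produce $(k-1)/2$ in one of the two slots, so do not try to match the typo; second, make sure the power of $i$ you collect is indeed $i^{k-1}$ after combining the $i^{k}$ from modularity with the $e(1/8)$-type constants that arise in the duplication and contour normalization.
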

\begin{proof}
See \cite[Equations 1.12 and 1.15]{MILLER}. At first glance, the function $\gamma(s, k)$ above seems to be different from what is given in \cite{MILLER}. We apply the identity $\Gamma(s) \Gamma(1-s)= \pi \csc ( \pi s)$ to arrive at the formula as written above.
\end{proof}

\subsection{Maass cusp forms} Let $f$ be a weight zero Hecke-Maass cusp form with Laplace eigenvalue $1/4 + \nu^2$. The Fourier series expansion of $f$ at  $\infty$ is given by 
\[
f(z)= \sqrt{y} \sum_{n \neq 0} \lambda_f(n) K_{ i \nu} (2 \pi |n|y) e(nx), 
\] 
where $ K_{ i \nu}(y)$ is a  Bessel function of second kind. Ramanujan-Petersson conjecture predicts that $|\lambda_f(n)|\ll n^\epsilon$.  Kim and Sarnak \cite{KS} proved  $|\lambda_f(n)|\ll n^{7/64+\epsilon}$. $L$-function associated to the form $f$ is similarly defined by $ L(s, f) := \sum_{n=1}^\infty \lambda_f(n) n^{-s}$ ( $\Re \ s>1$). It also extends to an entire function and satisfies the functional equation 
$ \Lambda(s, f) = \epsilon(f ) \Lambda(1- s, f)$, where $ |\epsilon(f )| = 1$. The completed $L$-function  $\Lambda(s, f)$ is given by
\[
\Lambda(f, s) = \pi^{-s} \Gamma \left( \frac{s  + i \nu   }{ 2}  \right)   \Gamma \left( \frac{s  - i \nu }{ 2} \right) L(f, s) . 
\]  
We need the following Voronoi summation formula for the Maass forms. This was first established by Meurman \cite{MERU} for full level.

\begin{lem} \label{voronoi Maass}
{\bf Voronoi summation formula}:
Let $\lambda_f(n)$ be as above. Let $h$ be a compactly supported smooth function in the interval $(0, \infty)$.  Let $ a, q \in \mathbb{Z}$ be such that $(a, q) = 1$.  We have
\begin{equation} \label{varequation}
 \sum_{n=1}^\infty \lambda_f (n) e_q(an) h(n) = q \sum_{\pm} \sum_{n=1}^\infty \lambda_f(\mp n) e_q(\pm \overline{a}n) H^{\pm} \left( \frac{n}{q^2}\right),
\end{equation}
where $ a \overline{a} \equiv 1 \bmod q$, and 
\begin{align*}
&H^{\pm} (y)= \frac{- i}{4 \pi^2} \int_{(\sigma)} (\pi^2 x)^{-s}  C^{+}(-s) \pm C^{-}(-s) \tilde{g} (-s) \ \ ds,
\end{align*} 
with 
\[
C^{+} (s) = \frac{\Gamma\left( \frac{1- s + i \nu}{2}\right)  \Gamma\left( \frac{1- s - i  \nu}{2}\right)}{\Gamma\left( \frac{ s + i \nu}{2}\right)  \Gamma\left( \frac{s -i \nu}{2}\right)} , \ \  C^{-} (s) = \frac{\Gamma\left( \frac{2- s + i \nu}{2}\right)  \Gamma\left( \frac{2- s - i  \nu}{2}\right)}{\Gamma\left( \frac{ 1+s + i \nu}{2}\right)  \Gamma\left( \frac{ 1+ s -i \nu}{2}\right)} .
\]

\end{lem}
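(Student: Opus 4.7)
The plan is to derive the identity by Mellin inversion combined with the functional equation of the additively twisted $L$-series, following the classical strategy of Meurman. Writing $\tilde h(s)=\int_0^\infty h(x)x^{s-1}\,dx$, Mellin inversion expresses the left-hand side as
\[
\sum_{n=1}^{\infty}\lambda_f(n)e_q(an)h(n) = \frac{1}{2\pi i}\int_{(\sigma)} \tilde h(s)\, L_{a/q}(s,f)\,ds,
\]
where $L_{a/q}(s,f):=\sum_{n\geq 1}\lambda_f(n)e_q(an)n^{-s}$ is absolutely convergent for $\sigma$ sufficiently large. The problem thus reduces to understanding the analytic continuation and functional equation of $L_{a/q}(s,f)$.

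Next I would prove a functional equation for $L_{a/q}(s,f)$ relating it to twisted $L$-series at $-\overline{a}/q$. The input is the automorphy of $f$: choose $\gamma=\begin{pmatrix}a&b\\q&d\end{pmatrix}\in \mathrm{SL}(2,\mathbb Z)$, integrate $f(\gamma z)$ against $y^{s-1/2}$ along the imaginary axis, and compare with the analogous integral of $f(z)$. Because the Bessel-$K$ kernel in the Fourier expansion of a Maass form is insensitive to the sign of $n$ but $e_q(an)$ is not, the resulting functional equation takes the shape
\[
L_{a/q}(s,f) = q^{1-2s}\bigl[C^+(s)\,L_{-\overline{a}/q}^+(1-s,f) + C^-(s)\,L_{-\overline{a}/q}^-(1-s,f)\bigr],
\]
where $L^{\pm}$ are the pieces of the twisted $L$-series carrying $\lambda_f(\pm n)$ for $n\geq 1$. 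The two gamma factors $C^{\pm}$ arise from the even and odd parts of the Whittaker expansion of $f$ on $\mathrm{GL}(2,\mathbb R)$ and are precisely those written in the statement.

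Finally I would shift the Mellin contour from $(\sigma)$ to $(-\sigma)$, picking up no residues since $L_{a/q}(s,f)$ is entire by cuspidality of $f$, substitute the functional equation, and reindex $s\mapsto -s$. The two kernels $C^{+}(-s)\pm C^{-}(-s)$ that define $H^{\pm}$ emerge by grouping the contributions according to whether the sign of $\overline{a}$ in $e_q(\pm \overline{a} n)$ matches or opposes the sign of $n$ in $\lambda_f(\mp n)$. The main obstacle is establishing the functional equation with the correct gamma factors and sign conventions; the bookkeeping can be handled either by direct manipulation of Mellin--Barnes integrals (using $\Gamma(s)\Gamma(1-s)=\pi\csc(\pi s)$ as in Lemma \ref{voronoi}) or by appealing to $\mathrm{GL}(2,\mathbb R)$ Whittaker-function theory, where $C^{\pm}$ correspond to the two Whittaker models of the unramified principal-series representation attached to $f$.
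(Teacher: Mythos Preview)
Your sketch is correct and follows the classical Meurman approach via Mellin inversion and the functional equation of the additively twisted $L$-series; this is precisely the argument underlying the cited literature. The paper itself does not give a proof but simply refers to \cite[p.~44, eq.~(A.14)]{KMV}, noting a change of variable $1 - s/2 = -u$ to bring the formula into the stated form. In that sense your write-up supplies substantially more detail than the paper does, and there is nothing to correct.
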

\begin{proof}
See \cite[Page 44 equation (A.14)]{KMV}.  We have substituted the change of variable $1-s/2 = -u$ to arrive at  the formula in above form. 
\end{proof}

\section{Some Useful Lemmas}
In this section, we state some results that we will use. We start by recalling the following version of Stirling's formula. 

\begin{lem} \label{stirling}
Let $)< \delta < \pi $ be a fixed positive number. Then in the sector $|\arg s| < \pi -\delta$, $|s|\geq 1$, we have
\begin{equation*}
\Gamma(s) = \sqrt{2 \pi} \exp \{ (s-1/2) \log s - s\} \left( 1+ O (|s|^{-1}) \right).
\end{equation*} Also in any vertical strip $A_1 \leq \sigma \leq A_2 $, $|t| \geq 1$, $s = \sigma + it$ we have
\begin{equation}
\Gamma(s) = \sqrt{2 \pi}  t^{s-1/2}\exp \{-  \frac{1}{2} \pi t  - it + \frac{1}{2}  \pi ( \sigma -1/2)i  \} \left( 1+ O (|t|^{-1}) \right), 
\end{equation} and 
\begin{equation*}
|\Gamma(s)| = \sqrt{2\pi} t^{\sigma-1/2} e^{-\frac{\pi}{2} |t|} \left(  1+ O\left( |t|^{-1}\right)\right). 
\end{equation*}
\end{lem}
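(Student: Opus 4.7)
This lemma is the classical complex Stirling formula; the plan is to establish (1) by the saddle-point method applied to the integral representation of $\Gamma(s)$, and then to deduce (2) and (3) as direct specializations.

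For assertion (1), I would begin from $\Gamma(s) = \int_0^\infty e^{-x}x^{s-1}\,dx$ (valid for $\Re s > 0$) and substitute $x = sy$ after rotating the contour of integration so that the integrand still decays. This yields
\[
\Gamma(s) = s^{s}\int_{C} e^{-s(y-\log y)}\,\frac{dy}{y},
\]
where $C$ is the rotated half-line. The phase $\varphi(y) = y - \log y$ has a unique saddle at $y = 1$ with $\varphi''(1) = 1$, and Laplace's method on the steepest-descent contour through the saddle produces
\[
\Gamma(s) = \sqrt{2\pi}\,s^{s-1/2}e^{-s}\bigl(1 + O(|s|^{-1})\bigr),
\]
which rewrites as the stated form after expressing $s^{s-1/2} = \exp\{(s-1/2)\log s\}$. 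Uniformity in the sector $|\arg s| < \pi - \delta$ follows from standard tail bounds whose implied constants depend only on $\delta$. For the portion of the sector with $\Re s \leq 0$, one either deforms the Hankel contour directly or invokes the reflection formula $\Gamma(s)\Gamma(1-s) = \pi/\sin(\pi s)$ together with the already established Stirling expansion for $1-s$.

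For (2), the plan is to substitute $s = \sigma + it$ with $A_1 \leq \sigma \leq A_2$ and $t \geq 1$ into (1), and expand
\[
\log s = \log|t| + i\Bigl(\tfrac{\pi}{2} - \tfrac{\sigma}{t}\Bigr) + O(|t|^{-2}).
\]
Collecting real and imaginary parts of $(s-1/2)\log s - s$ produces the stated exponent, and the case $t \leq -1$ is handled by complex conjugation. Assertion (3) is immediate from (2) by taking absolute values, since the contribution $-it + \tfrac{1}{2}\pi(\sigma-1/2)i$ is purely imaginary. The only delicate point is the uniformity of the saddle-point error as $|\arg s|$ approaches $\pi - \delta$, which is a standard exercise in steepest-descent analysis; as the entire statement is classical, detailed proofs may be found in Whittaker and Watson or Olver's asymptotic analysis text.
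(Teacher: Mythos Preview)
Your outline is correct and follows the standard route to Stirling's formula via the saddle-point method. However, the paper does not prove this lemma at all: it is introduced with the phrase ``we recall the following version of Stirling's formula'' and stated without proof or reference, as a classical fact. So there is no paper proof to compare against; you have supplied a genuine (and valid) argument where the paper simply cites the result.
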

We now recall the Rankin-Selberg bound for Fourier coefficients of Hecke-Maass cusp forms in the following lemma. 

\begin{lem} \label{rankin Selberg bound}
Let $\lambda_f(n)$ be Fourier coefficients of a holomorphic or Hecke-Maass cusp form. For any real number $x\geq 1$, we have 
\begin{align*}
\sum_{1\leq n \leq x} \left| \lambda_f(n) \right|^2 \ll_{f, \epsilon} x^{1+\epsilon}. 
\end{align*} 

\end{lem}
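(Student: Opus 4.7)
The plan is to deduce the estimate from the analytic properties of the Rankin--Selberg $L$-function $L(s, f \otimes \bar f)$. Introduce the Dirichlet series
\[
D(s) := \sum_{n=1}^{\infty} \frac{|\lambda_f(n)|^2}{n^s}, \qquad \Re s > 1.
\]
Using the Hecke multiplicativity of $\lambda_f(n)$ together with the local factorization in terms of Satake parameters $\alpha_p,\beta_p$ (with $\alpha_p\beta_p = 1$ and $\alpha_p+\beta_p = \lambda_f(p)$), a direct comparison of Euler factors gives the identity
\[
D(s)\,\zeta(2s) \;=\; L(s, f\otimes \bar f).
\]
Because $f$ has level one, no correction factors appear at ramified primes, so the identity is clean in both the holomorphic and the Maass setting.

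The key input is the meromorphic continuation of $L(s, f\otimes \bar f)$. I would establish this via the classical Rankin--Selberg unfolding trick: pairing $|f(z)|^2$ (weighted by $y^k$ in the holomorphic case) against the real-analytic Eisenstein series $E(z,s)$ on $\mathrm{SL}(2,\Z)\backslash \mathbb{H}$ and unfolding against the standard fundamental domain expresses $L(s, f\otimes\bar f)$, up to an explicit product of gamma factors, as a Mellin-type integral transform of $E(z,s)$. The known analytic properties of $E(z,s)$ then transfer: $L(s, f\otimes\bar f)$ continues to a meromorphic function on $\mathbb{C}$, holomorphic except for a simple pole at $s=1$ whose residue is a positive constant multiple of the Petersson norm $\|f\|^2 \neq 0$. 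Combined with the identity above, $D(s)$ extends meromorphically to $\Re s > 1/2$ with a single simple pole at $s=1$ and polynomial growth in vertical strips.

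To conclude, I would apply a Tauberian argument (Wiener--Ikehara, or a Perron-type contour shift that exploits the convexity bound for $L(s, f\otimes\bar f)$ along $\Re s = 1/2 + \epsilon$). The simple pole at $s=1$ with positive residue, together with holomorphy on the rest of the line $\Re s = 1$, yields the asymptotic
\[
\sum_{n \le x} |\lambda_f(n)|^2 \;\sim\; c_f\, x,
\]
which is strictly stronger than the required $O_\epsilon(x^{1+\epsilon})$. The main technical obstacle lies in the unfolding computation and the convergence analysis, especially in the Maass case where one must handle the Mellin transform of products of $K$-Bessel functions and simplify the resulting quotient of gamma factors; this is routine but not instantaneous, and is documented in the standard references already cited above.
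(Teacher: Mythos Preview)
Your outline is correct and is precisely the classical Rankin--Selberg argument: relate $\sum_n |\lambda_f(n)|^2 n^{-s}$ to $L(s,f\otimes\bar f)/\zeta(2s)$ via the Euler product, continue $L(s,f\otimes\bar f)$ meromorphically by unfolding against the Eisenstein series, and then extract the asymptotic $\sum_{n\le x}|\lambda_f(n)|^2\sim c_f x$ by a Tauberian theorem. This is stronger than the stated $O_{f,\epsilon}(x^{1+\epsilon})$ and is the standard route.

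The paper, however, does not prove this lemma at all: it is simply recorded in the preliminaries section as a known input (hence the name ``Rankin--Selberg bound'') and invoked later when applying Cauchy--Schwarz to the $n$-sum. So there is nothing to compare at the level of argument; your proposal supplies what the paper takes for granted. One minor remark: since only the upper bound $\ll_{f,\epsilon} x^{1+\epsilon}$ is needed, you could bypass the Tauberian step entirely and just use that $D(s)$ converges absolutely for $\Re s>1$ (from the pole structure of $L(s,f\otimes\bar f)$), which already gives $\sum_{n\le x}|\lambda_f(n)|^2 \le x^{1+\epsilon}D(1+\epsilon)\ll_{f,\epsilon} x^{1+\epsilon}$ by positivity.
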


We also require to estimate the exponential integral of the form: 
\begin{equation} \label{eintegral}
\mathfrak{I}= \int_a^b g(x) e(f(x)) dx,
\end{equation}
where $f$ and $g$ are  real valued smooth functions on the interval $[a, b]$. Suppose on the interval $[a, b]$ we have $|f^\prime(x)| \geq B$, $|f^{(j)}(x)| \leq B^{1+\epsilon}$ for $j\geq 2$  and $ |g^{(j)}(x)|\ll_j 1 $. Then by a change of variables
\[
 f(x) = u,  \ \ \ f^\prime(x) \ dx = du,
\]
we obtain
\[
\mathfrak{I} = \int_{f(a)}^{f(b)} \frac{g(x)}{ f^\prime(x) } e(u)\ du.
\] 
Applying integration by parts, differentiating $ g(x)/ f^\prime(x) $ $j$-times and integrating $e(u)$, we have
\begin{equation} \label{unstationary}
\mathfrak{I} \ll_{j, \epsilon} B^{-j + \epsilon}.
\end{equation} We use this bound at several place to show that in absence of stationary phase point certain integrals are negligibly small. Next we consider the case when stationary phase exists i.e., when $f^\prime(x)= 0$ for some $x$ in the interval $(a, b)$. 
\begin{lem} \label{exponential inte}
Let $f$ and $g$ be smooth real valued functions on the interval $[a, b]$ that satisfy
\begin{align} \label{huxely bound}
f^{(i)} \ll \frac{\Theta_f}{ \Omega_f^i}, \ \ g^{(j)} \ll \frac{1}{\Omega_g^j} \ \ \ \text{and} \ \ \ f^{(2)} \gg \frac{\Theta_f}{ \Omega_f^2},
\end{align} for $i=1, 2$ and $j=0, 1, 2$. Suppose that $g(a) = g(b) = 0$. 
\begin{enumerate}
\item Suppose $f^\prime$ and $f^{\prime \prime}$ do not vanish   on the interval $[a, b]$. Let $\Lambda = \min_{ x\in [a, b]} |f^\prime (x)| $. Then we have
\[
\mathfrak{I} \ll \frac{\Theta_f}{ \Omega_f^2 \Lambda^3} \left( 1 +\frac{\Omega_f}{\Omega_g} +\frac{\Omega_f^2}{\Omega_g^2} \frac{\Lambda}{\Theta_f/ \Omega_f} \right).
\]

\item Suppose $x_0\in[a,b]$ is the unique point where $f^\prime(x_0)=0$. Moreover, let $f'$ change sign from negative to positive at $x = x_0$. Let $\kappa= \min \{  b-x_0, x_0-a \}$. Further suppose that bound in equation \eqref{huxely bound} holds for $i=4$. Then we have the following  asymptotic expansion of $\mathfrak{I} $
\[
\mathfrak{I} = \frac{g(x_0) e( f(x_0) + 1/8)}{\sqrt{f^{\prime \prime } (x_0)}} + \left( \frac{\Omega_f^4}{ \Theta_f^2 \kappa^3} +  \frac{\Omega_f}{ \Theta_f^{3/2}  } +  \frac{\Omega_f^3}{ \Theta_f^{3/2} \Omega_g^2 }\right). 
\]
\item Let $x_0$ be as above and $f, g$ be smooth functions with bounds on derivatives as above. We will also need the expansion of $\mathfrak{I}$ up to the the second main term,
\begin{equation*}
\begin{split}
\mathfrak{I} = &\frac{e(f(x_0) + 1/8)}{\sqrt{f''(x_0)}}\bigg(g(x_0) + \frac{ig''(x_0)}{4\pi f''(x_0)} - \frac{i(g(x_0)f^{(4)}(x_0) + g'(x_0)f^{(3)}(x_0))}{16\pi f''(x_0)^2} + \frac{5i g(x_0)f^{(3)}(x_0)^2}{48\pi f''(x_0)^3} \bigg)
\\& + O\left(\frac{\Omega_f^5}{\Omega_g^4\Theta_f^{5/2}} +\frac{\Omega_f}{\Theta_f^{5/2}} \sum_{j=0}^3 \frac{\Omega_f^j}{\Omega_g^j} + \frac{\Omega_f^7}{\Theta_f^{7/2}\Omega_g^6} + \frac{\Omega_f}{\Theta_f^{7/2}}\sum_{j=0}^5\frac{\Omega_f^j}{\Omega_g^j}\right).
\end{split}
\end{equation*}
\end{enumerate}
\end{lem}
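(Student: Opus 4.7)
The lemma packages three standard consequences of the stationary phase method at varying levels of precision, so the overall strategy is classical even though the specific error terms require care.

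For part (1), since $g(a)=g(b)=0$ the natural tool is repeated integration by parts with the operator $u\mapsto -(u/f')'/(2\pi i)$. I would integrate by parts twice. The first application is clean because the boundary terms vanish; the resulting amplitude is $(g/f')'=g'/f'-gf''/(f')^2$. Applying the hypotheses $|f'|\geq\Lambda$ and $|f''|\ll\Theta_f/\Omega_f^2$ bounds this by $1/(\Omega_g\Lambda)+\Theta_f/(\Omega_f^2\Lambda^2)$. A second integration by parts produces boundary terms (now non-trivial, since $(g/f')'$ need not vanish at the endpoints) plus an integral whose amplitude is differentiated once more in the same way. Collecting the three resulting contributions — one involving only $f''$, one involving the mixed $g'f''/(f')^3$ shape, and one with $g''/(f')^2$ — and tidying them into the stated factorization is routine.

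For part (2), I would decompose $\mathfrak{I}=\mathfrak{I}_{\text{near}}+\mathfrak{I}_{\text{far}}$ using a smooth cutoff $\rho$ supported in a window $|x-x_0|\leq\eta$, with $\eta=\Omega_f/\sqrt{\Theta_f}$, chosen so that $|f'(x)|\asymp(\Theta_f/\Omega_f^2)|x-x_0|$ is large outside the window. On the far piece I would invoke part (1) with $\Lambda\asymp\sqrt{\Theta_f}/\Omega_f$, which produces the error $\Omega_f^3/(\Theta_f^{3/2}\Omega_g^2)$ directly and the term $\Omega_f^4/(\Theta_f^2\kappa^3)$ from the non-vanishing boundary contributions at the endpoints of $[a,b]$ (the parameter $\kappa$ quantifies how close $x_0$ sits to the endpoints). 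On the near piece I would change variables to $y=(x-x_0)\sqrt{f''(x_0)}$ so that $f(x)-f(x_0)=\tfrac12y^2+R(x)$ with cubic remainder, Taylor-expand $e(R)$ and $g$ to leading order, then extend the resulting Fresnel integral to $\mathbb{R}$ using $\int_{\mathbb R}e(y^2/2)\,dy=e(1/8)$. Gaussian-tail estimates and the bound on the cubic remainder yield the remaining error $\Omega_f/\Theta_f^{3/2}$.

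For part (3), I would refine the previous Taylor expansions: write $g(x)=g(x_0)+g'(x_0)(x-x_0)+\tfrac12g''(x_0)(x-x_0)^2+\cdots$ and $R(x)=\tfrac{1}{6}f^{(3)}(x_0)(x-x_0)^3+\tfrac{1}{24}f^{(4)}(x_0)(x-x_0)^4+\cdots$, then expand $e(R)=1+2\pi iR+\tfrac12(2\pi i R)^2+\cdots$. Multiplying out and integrating term-by-term against $e(y^2/2)$, using the Gaussian moments $\int_{\mathbb{R}} y^{2k}e(y^2/2)\,dy$ (odd moments vanish by parity, even moments come from differentiating the Fresnel integral) collects all contributions of size $1/f''(x_0)^{3/2}$; the terms with $g''$, with $f^{(4)}g$ and $f^{(3)}g'$, and with $(f^{(3)})^2g$ then assemble exactly into the four-term asymptotic stated. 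The main obstacle is the bookkeeping: there are many cross terms, and one must verify that every contribution not listed as a main term either vanishes by parity in $y$, or falls below one of the four error sizes in the $O(\cdot)$, which requires combining the derivative hypotheses on $f$ (now needed up to $f^{(4)}$, supplied by the extra $i=4$ assumption imported from part (2)) with the Gaussian-tail contribution from truncating to the near window.
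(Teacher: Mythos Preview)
Your proposal is a reasonable sketch of the classical stationary phase method, and is essentially correct in outline. However, the paper does not actually prove this lemma: its entire proof consists of citing Theorem~1 and Theorem~2 of Huxley~\cite{HUX} for parts~(1) and~(2), and Proposition~8.2 of Blomer--Khan--Young~\cite{bky} for part~(3), noting only that one truncates the BKY expansion at $n=4$ so that $n=0,1,2,3$ give the main terms and $n=3,4,5$ the error.

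So the comparison is: the paper treats the lemma as a black box imported from the literature, while you outline a self-contained proof. Your approach is in fact what lies behind the cited references---Huxley's Theorems~1 and~2 are proved exactly by repeated integration by parts and the near/far decomposition you describe, and the BKY expansion is the systematic version of the Taylor-expansion-plus-Gaussian-moments computation you sketch for part~(3). One small correction: in part~(2), since $g(a)=g(b)=0$ the far amplitude $g(1-\rho)$ still vanishes at the endpoints, so there are no literal boundary terms; the $\Omega_f^4/(\Theta_f^2\kappa^3)$ contribution instead arises from the fact that on the far piece near an endpoint one only has $\Lambda\asymp(\Theta_f/\Omega_f^2)\kappa$ rather than $\Lambda\asymp\sqrt{\Theta_f}/\Omega_f$. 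Otherwise your account is sound.
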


\begin{proof}
For $(1)$ and $(2)$, see Theorem 1 and Theorem 2 of \cite{HUX}. For $(3)$, we use Proposition 8.2 of \cite{bky} and expand the expression up to $n=4$. The $n=0, 1, 2, 3$ terms contribute to the main term and the terms $n=3, 4, 5$ give the error term.
\end{proof}
We shall also require  the following estimates on oscillatory integrals in two variables. Let $f(x, y)$ and $g(x, y)$  be two real valued smooth functions on the rectangle $[a, b] \times [c,d]$.  Consider the following exponential integral in two variables,
\begin{equation}
\mathfrak{I}_{(2)} := \int_a^b \int_c^d g(x, y) e(f(x, y)) dx  \ dy.
\end{equation}
Suppose there exist two positive parameters $r_1$ and $r_2$ such that
\begin{align} \label{conditionf}
\frac{\partial^2 f}{\partial^2 x}\gg r_1^2, \hspace{1cm} \frac{\partial^2 f}{\partial^2 y}\gg r_2^2,\hspace{1cm}    \frac{\partial^2 f(x, y)}{\partial^2 x} \frac{\partial^2 f}{\partial^2 y} -  \left[\frac{\partial^2 f}{\partial x \partial y} \right] \gg r_1 r_2,  
\end{align}  for all $x, y \in [a, b] \times [c,d]$. Then we have (See \cite[Lemma 4]{BR2})
\[
 \int_a^b \int_c^d  e(f(x, y)) dx dy \ll \frac{1}{r_1 r_2}. 
\] 
Further suppose that $ \textrm{Supp}(g) \subset (a,b) \times (c,d)$. The total variation of $g$ equals
\begin{equation} \label{total variation}
\textrm{var}(g) := \int_a^b \int_c^d  \left|  \frac{\partial^2 g(x, y)}{\partial x \partial y} \right|\ dx\ dy.
\end{equation}
 We have the following result (see \cite[Lemma 5]{BR3}).
 \begin{lem} \label{double expo sum}
 Let $f$ and $g$ be as above and let $f$ satisfy the conditions given in \eqref{conditionf}. Then
 \[
 \int_a^b \int_c^d g(x, y) e(f(x, y)) dx dy \ll \frac{\textrm{var}(g)}{r_1 r_2},
 \] with an absolute implied constant. 
 \end{lem}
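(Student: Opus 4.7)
The plan is to reduce the weighted two-variable oscillatory integral to the unweighted one via integration by parts, so that the hypothesis \eqref{conditionf} on $f$ can be fed into the $g$-free estimate already recalled above. Since $\mathrm{Supp}(g)$ is contained in the open rectangle $(a,b)\times(c,d)$, all boundary contributions from the integration by parts will vanish, and the estimate will collapse to $\|F\|_\infty \cdot \mathrm{var}(g)$ for a suitable antiderivative $F$ of $e(f)$.

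First I would introduce the two-variable antiderivative
\[
F(u,v) := \int_a^u \int_c^v e\bigl(f(s,t)\bigr)\, ds\, dt, \qquad (u,v)\in[a,b]\times[c,d].
\]
By the unweighted two-variable stationary-phase bound recalled just before the lemma (applied on the sub-rectangle $[a,u]\times[c,v]$, whose hypotheses are inherited from \eqref{conditionf}), one has
\[
|F(u,v)| \ll \frac{1}{r_1 r_2}
\]
uniformly in $(u,v)$, with an absolute implied constant. Note that $\partial^2 F / \partial u\,\partial v = e(f(u,v))$.

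Next I would integrate by parts twice. Writing
\[
\mathfrak{I}_{(2)} = \int_a^b\!\!\int_c^d g(x,y)\, \frac{\partial^2 F(x,y)}{\partial x\,\partial y}\, dx\, dy,
\]
and integrating by parts once in $x$ (for each fixed $y$) and once in $y$ (for each fixed $x$), the boundary terms at $x=a,b$ and $y=c,d$ all vanish because $g$ is supported in the open rectangle $(a,b)\times(c,d)$ (so $g$ and all its first-order partial derivatives vanish on the boundary). What remains is
\[
\mathfrak{I}_{(2)} = \int_a^b\!\!\int_c^d F(x,y)\, \frac{\partial^2 g(x,y)}{\partial x\,\partial y}\, dx\, dy.
\]
Finally, applying the uniform bound on $F$ and the definition \eqref{total variation} of $\mathrm{var}(g)$, one concludes
\[
|\mathfrak{I}_{(2)}| \;\leq\; \|F\|_\infty \int_a^b\!\!\int_c^d \left|\frac{\partial^2 g(x,y)}{\partial x\,\partial y}\right| dx\, dy \;\ll\; \frac{\mathrm{var}(g)}{r_1 r_2}.
\]

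The only real delicacy is the first step: verifying that the hypothesis \eqref{conditionf} on $f$ is strong enough to force the uniform bound $|F(u,v)| \ll 1/(r_1 r_2)$ on every sub-rectangle, since $F$ is itself a two-variable oscillatory integral without cutoff. This is exactly the content of the one-line estimate from \cite{BR2} recalled immediately before the lemma statement, applied with the sub-rectangle in place of $[a,b]\times[c,d]$; the hypotheses \eqref{conditionf} are pointwise, so they pass to any sub-rectangle without loss. With that in hand, the two integrations by parts and the vanishing of boundary terms are routine, and the claimed bound follows.
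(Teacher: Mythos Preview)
Your argument is correct and is essentially the standard proof of this estimate; the paper itself does not give a proof but simply cites \cite[Lemma~5]{BR3}, where precisely this integration-by-parts reduction to the unweighted bound of \cite{BR2} is carried out. Your treatment of the boundary terms and of the uniformity of the sub-rectangle bound is accurate, so nothing further is needed.
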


\subsection{A Fourier-Mellin transform} Let $U$ be a smooth real valued function supported on the interval $[a, b] \subset (0, \infty)$ and satisfying $U^{(j)}\ll_{a, b, j} 1$. Let $r\in \mathbb{R}$ and $s= \sigma + i \beta \in \mathbb{C}$. We consider the following integral transform
\begin{equation} \label{FM}
U^\natural (r, s) = \int_0^{\infty} U(x) e(-rx) x^{s-1} dx. 
\end{equation} We are interested in the behaviour of this integral in terms of parameters $\beta$ and $r$ (assuming that $a, b$ and $\sigma$ are fixed). The integral  $U^\natural (r, s) $ is of the form given in equation \eqref{eintegral} with
\[
g(x) = U(x) x^{\sigma-1} \ \ \ \  \textrm{and} \ \ \ \ f(x) =\frac{1}{2 \pi} \beta \log x - rx. 
\] Derivatives of $f(x)$ are given by 
\[
f^\prime (x) = -r + \frac{\beta}{2 \pi x}, \quad f^{(j)} (x)= (-1)^{j-1} (j-1)! \frac{\beta}{2 \pi x^j} \quad \text{ for } j>1. 
\] 
The unique stationary point is given by 
\[
f^\prime (x_0)= 0 \ \ \  \   \textrm{i.e.}  \ \  \ \ \ x_0 = \frac{\beta}{2 \pi r}. 
\]
We can write $f^\prime (x)$ in terms of $\beta$ and $r$ as 
\[
f^\prime (x) = \frac{\beta}{2 \pi} \left( \frac{1}{x} - \frac{1}{x_0}\right) = r \left( \frac{x_0}{x} - 1\right). 
\] 
Let us first assume that $x_0 \notin[a/2, 2b]$. In this case we observe that $|f^\prime (x)| \gg_{a, b, \sigma} \max \{|r|, |\beta| \}$  and $f^{(j)} (x)\ll_{a, b, \sigma, j} |\beta|$ for $x \in [a, b]$. Using equation \eqref{unstationary}, $U^\natural (r, s) \ll_j \min \{  |r|^{-j}, |\beta|^{-j}\}$. Let us now consider the case when $x_0 \in [a/2 , 2b]$. In this case we observe that $|r| \asymp_{a, b} |\beta|$. We use  Lemma \ref{exponential inte} with $\Theta_f = |\beta|$ and $\Omega_f=\Omega_g=1$ to conclude
\begin{equation*}
\begin{split}
U^\natural (r, s)= &\frac{e(f(x_0) + 1/8)}{\sqrt{f''(x_0)}}\bigg(g(x_0) + \frac{ig''(x_0)}{4\pi f''(x_0)} - \frac{i(g(x_0)f^{(4)}(x_0) + g'(x_0)f^{(3)}(x_0))}{16\pi f''(x_0)^2} + \frac{5i g(x_0)f^{(3)}(x_0)^2}{48\pi f''(x_0)^3} \bigg)
\\& \qquad + O_{a,b,\sigma}\left(\min\{|\beta|^{-5/2}, |r|^{-5/2}\}\right).
\end{split}
\end{equation*}
 
We record the  above results in the following lemma.

\begin{lem} \label{Fourier Mellin} 
Let $U$ be a smooth real valued function with $\textrm{supp} (U) \subset[a, b] \subset (0, \infty)$ that satisfies $U^{(j)}(x)\ll_{a, b, j} 1$. Let  $r\in \mathbb{R}$ and $s= \sigma + i \beta \in \mathbb{C}$. We have

\begin{equation}
\begin{split}
U^{\natural}(r,s)=\frac{\sqrt{2\pi}e(1/8)}{\sqrt{-\beta}}\left(\frac{\beta}{2\pi er}\right)^{i\beta} &\bigg[U_0\bigg(\sigma,\frac{\beta}{2\pi r}\bigg) - \frac{i}{12\beta}U_1\bigg(\sigma,\frac{\beta}{2\pi r}\bigg)\bigg] \\& + O_{a,b,\sigma}\left(\min\{|\beta|^{-5/2},|r|^{-5/2}\}\right),
\end{split}
\end{equation}
where
\begin{equation}
\begin{split}
&U_0(\sigma, x) = x^\sigma U(x) \quad \text{ and } 
\\& U_1(\sigma, x) = U_0(\sigma, x) + 3x^2\frac{d}{dx}\bigg(\frac{U_0(\sigma,x)}{x}\bigg) + 6x^3\frac{d^2}{dx^2}\bigg(\frac{U_0(\sigma,x)}{x}\bigg).
\end{split}
\end{equation}
Integrating equation \eqref{FM} by parts,  we also have

\[
U^\natural (r, s) = O_{a, b, \sigma, j} \left(\min \left\lbrace \left(\frac{1+|\beta|}{|r|} \right)^j , \left(\frac{1+|r|}{|\beta|} \right)^j \right\rbrace \right). 
\]
\end{lem}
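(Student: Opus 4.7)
The plan is to treat $U^\natural(r,s)$ as an exponential integral in standard form and apply the stationary phase expansion of Lemma \ref{exponential inte}(3). Writing $s = \sigma + i\beta$, we rewrite
\[
U^\natural(r,s) = \int_0^\infty g(x)\,e(f(x))\,dx, \qquad g(x) = U(x)\,x^{\sigma-1},\quad f(x) = \frac{\beta}{2\pi}\log x - rx.
\]
Routine differentiation gives $f'(x) = -r + \beta/(2\pi x)$, $f^{(j)}(x) = (-1)^{j-1}(j-1)!\,\beta/(2\pi x^j)$ for $j \geq 2$, and a unique stationary point $x_0 = \beta/(2\pi r)$. On the support of $U$, one has $f^{(j)}(x) \ll_{a,b,j} |\beta|$ and $g^{(j)}(x) \ll_{a,b,\sigma,j} 1$.

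First I would handle the second bound, which gives the ``off-diagonal'' decay. If $x_0 \notin [a/2, 2b]$, then $|f'(x)| \gg \max\{|r|, |\beta|\}$ for $x \in [a,b]$, and repeated integration by parts against $e(f(x))$ — differentiating $g(x)/f'(x)$ each time — yields the estimate $U^\natural(r,s) \ll_j \min\{|r|^{-j}, |\beta|^{-j}\}$ times $(1+|\beta|+|r|)^j$, from which the claimed $\min\{((1+|\beta|)/|r|)^j, ((1+|r|)/|\beta|)^j\}$ bound follows after a minor adjustment. This case also disposes of the main asymptotic since the exponential integral is then negligible compared with the stated error $\min\{|\beta|^{-5/2}, |r|^{-5/2}\}$.

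The heart of the matter is the case $x_0 \in [a/2, 2b]$, in which $|r| \asymp_{a,b} |\beta|$. Here I would apply Lemma \ref{exponential inte}(3) with parameters $\Theta_f = |\beta|$ and $\Omega_f = \Omega_g = 1$. The error term produced by that lemma is then $O(|\beta|^{-5/2})$, which coincides with $O(\min\{|\beta|^{-5/2}, |r|^{-5/2}\})$ on account of $|r| \asymp |\beta|$. For the main term, evaluation at $x_0$ gives
\[
f(x_0) = \frac{\beta}{2\pi}\log\!\Big(\frac{\beta}{2\pi e r}\Big),\qquad f''(x_0) = -\frac{2\pi r^2}{\beta},\qquad f^{(3)}(x_0) = \frac{(2\pi r)^3}{\beta^2},\qquad f^{(4)}(x_0) = -\frac{3(2\pi r)^4}{\beta^3},
\]
so that $e(f(x_0)) = (\beta/(2\pi e r))^{i\beta}$ and $1/\sqrt{f''(x_0)} = \sqrt{2\pi}/(\sqrt{-\beta}\,|r|/|r|)$ times appropriate sign factors. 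Substituting these together with $g(x_0) = U_0(\sigma, x_0)/x_0$, $g'(x_0)$, and $g''(x_0)$ into the four-term expansion of Lemma \ref{exponential inte}(3), the leading $g(x_0)$ contribution produces the $U_0$-piece while the three lower-order contributions combine, after the identity $x_0 = \beta/(2\pi r)$ converts all powers of $r$ into powers of $\beta$ and $x_0$, to produce precisely the $-\tfrac{i}{12\beta}U_1(\sigma, x_0)$ correction.

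The main obstacle is the last bookkeeping step: the three subleading terms in the expansion of Lemma \ref{exponential inte}(3) involve $g''(x_0)$, $g(x_0)f^{(4)}(x_0)+g'(x_0)f^{(3)}(x_0)$ and $g(x_0)f^{(3)}(x_0)^2$, and one must show that after clearing denominators they collapse to the combination $U_0 + 3x^2(d/dx)(U_0/x) + 6x^3(d^2/dx^2)(U_0/x)$ appearing in $U_1$. This is a deterministic but delicate calculation that I would carry out by expressing $g^{(k)}(x_0)$ in terms of derivatives of $U_0(\sigma, x_0)/x_0$ and collecting the resulting terms by the total number of derivatives, using the explicit powers of $x_0$ coming from $f^{(j)}(x_0)/f''(x_0)^{j-1}$ to match the factors $3x_0^2$ and $6x_0^3$ in the definition of $U_1$. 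Once this identity is established the two bounds of the lemma follow at once.
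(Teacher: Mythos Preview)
Your proposal is correct and follows essentially the same approach as the paper: the paper's proof is the discussion immediately preceding the lemma, which sets up $g(x)=U(x)x^{\sigma-1}$, $f(x)=\tfrac{\beta}{2\pi}\log x - rx$, splits according to whether $x_0=\beta/(2\pi r)$ lies in $[a/2,2b]$, uses the integration-by-parts bound \eqref{unstationary} in the first case, and applies Lemma \ref{exponential inte}(3) with $\Theta_f=|\beta|$, $\Omega_f=\Omega_g=1$ in the second. In fact the paper stops at writing down the stationary phase expansion and does not carry out the algebraic verification that the three subleading terms combine into $-\tfrac{i}{12\beta}U_1$, so your ``main obstacle'' paragraph actually goes beyond what the paper records.
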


In the following sections we outline the details of the proof. We give details when $f$ is a holomorphic form. The case for Maass forms is similar, the only difference being the arguments of the gamma function. 

\section{ Applying Poisson summation Formula (Step 1:)} \label{start proof}
We apply Poisson summation formula to the $m$-sum in equation \eqref{splus}. Writing $m=\alpha + l q$ and then applying the Poisson summation formula to sum over $l$, we have
\begin{align} \label{first poisson}
& \sum_{m=1}^\infty m^{-i (t+v)} e\left(-\frac{m \overline{a}}{q} \right) e\left(-\frac{m x}{aq} \right) U\left(\frac{m }{N} \right) \notag\\
= &\sum_{\alpha \bmod q} e\left(-\frac{\alpha \overline{a}}{q} \right) \sum_{l\in \mathbb{Z}} (\alpha + l q)^{-i (t+v)} e\left(\frac{(\alpha+ l q) x}{aq} \right) U\left(\frac{\alpha+ l q}{N} \right)\notag \\
= &\sum_{\alpha \bmod q} e\left(-\frac{\alpha \overline{a}}{q} \right) \sum_{m\in \mathbb{Z}} \int_{\mathbb{R}} (\alpha + yq)^{-i (t+v)} e\left(\frac{(\alpha+ y q) x}{aq} \right) U\left(\frac{\alpha+ y q}{N} \right) e(-my) dy\notag\\
= &N^{1-i (t+v)} \sum_{\substack{m\in \mathbb{Z} \\ m \equiv \overline{a} \bmod q }} \int_{\mathbb{R}} U(u) u^{-i (t+v)} e\left(\frac{N(x-ma)}{aq} u\right) du. 
\end{align} 

We observe that $a\bmod q$ is uniquely determined by the above congruence relation. We first examine the contribution of $m=0$. We have $ |N(aq)^{-1} x| \ll q^{-1} (NK)^{1/2}$ (since $a\asymp Q$ and we will choose $Q = (N/K)^{1/2}$). From the congruence relation given in equation \eqref{first poisson} we have $(m, q)=1$, hence for the case $m=0$, only $q=1$ can occur.  Applying the second statement of the  Lemma \ref{Fourier Mellin} with $\beta = t+ v \asymp t$ and $r= (NK)^{1/2}$, we observe that the contribution of $m=0$ is negligibly small if $ (NK)^{1/2}/ t \ll 1$. This follows since we choose $ t^\epsilon \ll K \ll t^{1 - \epsilon}$. Let us now  consider the case where $m\neq 0$. In this case we have $|N(aq)^{-1} (am -x)| \asymp Nq^{-1} |m|$. Applying the  second statement of Lemma \ref{Fourier Mellin} with $\beta = t+ v \asymp t$ and $r= Nq^{-1} |m|$, we see that the contribution is negligibly small if $t/(q^{-1} N|m|) <1 $, that is if $|m| \gg qt^{1+\epsilon}/ N$. Therefore, it suffices to consider $m$ in the range $1\leq |m| \ll qt^{1+\epsilon}/N$.
%or in terms of $q$  
%\begin{equation}
%N/t^{1+\epsilon} \ll q \leq Q. 
%\end{equation} 
We split the sum over $q$ into dyadic subintervals of the form $[C, 2C]$, with $1 \ll C \leq Q$. We record  the above result in the  following lemma. 
\begin{lem} \label{lemma S plus}
Let $N$ and $K$ be as above. We have

\begin{equation}
S^{+}= \frac{N}{K} \sum_{\substack{ 1 \ll C \leq Q \\ \text{ dyadic}}} S^+ (N, C) + O_A(t^{-A}), 
\end{equation} where 
\begin{align} \label{splus n c}
S^+(N, C) &= \int_0^1 \int_{\mathbb{R}} N^{- i (t+v)} V\left(\frac{v}{K} \right) \sum_{C<q \leq 2C} \sum_{\substack{ (m, q)=1 \\ 1\leq |m| \ll qt^{1+\epsilon}/N}} \frac{1}{aq} U^\natural \left(\frac{N(ma-x)}{aq} , 1- i(t+v)\right) \notag \\
 & \times \sum_{n=1}^\infty \lambda_f(n)  e\left(\frac{nm}{q} \right) n^{iv} e\left(-\frac{nx}{aq} \right) V\left(\frac{n}{N} \right) dv \ dx. 
\end{align} Here $a\in (Q, q+Q]$ is uniquely determined by the congruence relation given in equation \eqref{first poisson}. 
\end{lem}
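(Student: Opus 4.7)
The plan is to apply the Poisson summation formula to the $m$-sum in equation \eqref{splus}, use the Fourier–Mellin asymptotics of Lemma \ref{Fourier Mellin} to truncate the resulting dual sum, and then perform a dyadic decomposition in $q$.

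First, I would write $m = \alpha + \ell q$ with $\alpha$ running over residues mod $q$ and $\ell \in \mathbb{Z}$, so that the additive character $e(-m \overline{a}/q) = e(-\alpha \overline{a}/q)$ decouples from $\ell$. Applying Poisson summation to the sum over $\ell$ introduces a new dual variable $m \in \mathbb{Z}$, and the complete exponential sum $\sum_{\alpha \bmod q} e(\alpha(m - \overline{a})/q)$ collapses to the congruence $m \equiv \overline{a} \pmod{q}$, which together with the constraint $Q < a \leq q + Q$ pins $a$ down uniquely and forces $(m, q) = 1$ because $(a, q) = 1$. Rescaling the archimedean integral by $u = (\alpha + y q)/N$ converts it exactly into the Fourier–Mellin transform $U^{\natural}\!\left(N(ma - x)/(aq),\, 1 - i(t + v)\right)$, and pulling out the prefactor $N^{1 - i(t + v)}$ accounts for the $N/K$ displayed outside the sum.

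Second, I would invoke the integration-by-parts bound in Lemma \ref{Fourier Mellin} with $\beta = t + v \asymp t$ to restrict the range of $m$. For $m = 0$ the congruence forces $q = 1$, and the frequency parameter $r = |Nx/(aq)|$ is bounded by $(NK)^{1/2}$ because $Q = (N/K)^{1/2}$ and $a \asymp Q$; combined with $K \ll t^{1 - \epsilon}$ this gives $(1 + r)/|\beta| \ll t^{-\epsilon/2}$, and the lemma yields arbitrary power saving. For $m \neq 0$ the frequency is $r \asymp N|m|/q$, and the same bound $((1 + |\beta|)/|r|)^{j}$ produces the required cut-off $|m| \ll q t^{1 + \epsilon}/N$, with every contribution beyond absorbed into the $O_A(t^{-A})$ error after a trivial estimate on the outer sums and integrals.

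Third, I would decompose the surviving $q$-sum dyadically into intervals $[C, 2C]$ with $1 \ll C \leq Q$, yielding $O(\log Q)$ pieces, and reassemble all the surviving ingredients into the displayed expression \eqref{splus n c}. The main technical point to watch will be the bookkeeping of the phases: the $n$-twist $e(nm/q)$ together with $e(-nx/(aq))$ from the circle method must survive intact through the Poisson step, while the original $m$-phase is absorbed into $U^{\natural}$ via the change of variable. A secondary care is to verify that the $m = 0$ term genuinely falls in the rapid-decay (non-stationary) regime of Lemma \ref{Fourier Mellin} rather than anywhere near a stationary point, which is precisely what the hypothesis $K \ll t^{1 - \epsilon}$ buys us.
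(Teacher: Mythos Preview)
Your proposal is correct and follows essentially the same approach as the paper: Poisson summation on the $m$-sum via residues mod $q$, the congruence $m\equiv\overline{a}\pmod q$ pinning down $a$ and forcing $(m,q)=1$, the change of variable producing $N^{1-i(t+v)}U^{\natural}$, the truncation of $m$ using the integration-by-parts bound of Lemma~\ref{Fourier Mellin} (with the $m=0$ case handled separately via $K\ll t^{1-\epsilon}$), and finally the dyadic split in $q$. The paper carries out exactly these steps in the same order.
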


\section{Applying Voronoi summation formula (Step 2:)}
We next apply the  Voronoi summation formula to the sum over $n$. Using $g(n)= n^{iv} e(-nx/ aq) V(n/N)$\\ in Lemma \ref{voronoi},  we obtain
\begin{align}
 \sum_{n=1}^\infty \lambda_f(n)  e\left(\frac{nm}{q} \right) n^{iv} e\left(-\frac{nx}{aq} \right) V\left(\frac{n}{N} \right) = q N^{iv}   \sum_{n\geq 1} \frac{\lambda_f(n)}{n} e\left( -\frac{\overline{m} n}{q}\right) G \left( \frac{ n}{q^2}, \frac{x}{aq}\right), 
\end{align} 
where
\begin{align*}
G(r, r_1) &=i^{k-1} \frac{1}{2 \pi^2} \int_{(\sigma)} (\pi^2 r)^{-s} \gamma(s, k) \int_0^\infty y^{i v} e\left( - r_1 N y \right)  V(y) y^{-s-1}  N^{-s}dy  \ ds \\
&= c \int_{(\sigma)}   (N r)^{-s} \gamma(s, k) V^\natural (Nr_1, -s + iv) ds.
\end{align*} 
Here $s= \sigma + i \tau$,  $\gamma(s, k)$ is defined in Lemma \ref{voronoi} and $ V^\natural $ is given by equation \eqref{FM}. By Lemma \ref{stirling},
\[
\gamma(s, k) \ll_{f , \sigma} 1 + | \tau|^{ 2\sigma +1}. 
\]
Using the second statement of Lemma \ref{Fourier Mellin} with $r= Nx/ aq \asymp (NK)^{1/2}/ q$ and $\beta = |\tau - v|$, we have

\[
V^\natural \left( \frac{Nx}{a q}, -s+ iv\right) \ll_{j, \sigma} \min \left\lbrace 1, \left( \frac{(NK)^{1/2}}{ q |\tau - v|}\right) \right\rbrace. 
\] 
Shifting the line of integration to $\sigma = M$ and choosing $j= 2M+ 3$, we have 
\[
G\left( \frac{n}{q^2}, \frac{x}{aq}\right) \ll \int_{(M)} \left( \frac{nN}{q^2}\right)^{-M} |\tau|^{2M+1}   \left( \frac{(NK)^{1/2}}{ q |\tau |}\right)^{2M+3} d\tau \ll (NK)^{3/2} \left( \frac{K}{n}\right)^M. 
\] 
Letting $M \rightarrow \infty $, we observe that the dual sum is negligibly small if $n \gg K$. For $n \ll K$, we move the line of integration to $\sigma = -1/2$ to obtain
\begin{align*}
G\left( \frac{n}{q^2}, \frac{x}{aq}\right) = c \left( \frac{n N}{q^2}\right)^{1/2} \sum_{j \in \mathcal{F}} \int_{\mathbb{R}} \left( \frac{n N}{q^2}\right)^{- i \tau}  \gamma(s, k) V^\natural \left( \frac{Nx}{a q},  \frac{1}{2}-i \tau+ iv\right)  W_j(\tau) \ d \tau,
\end{align*}   where $c$ is an absolute constant and $ \mathcal{F}$ is collection of $O(\log t)$ many real numbers in the interval \\ $[(NK)^{1/2} t^\epsilon/ C, (NK)^{1/2} t^\epsilon/ C]$ containing $0$. For each $j$, $W_j$ is a smooth partition of unity satisfying  
\[
y^k W_j^{(k)} (y)\ll_k 1 \ \ \ \and \ \ \ \sum_{ j \in  \mathcal{F} } W_j (x) = 1  \ \ \ \textrm{for} \ \ \ x \in \left[(NK)^{1/2} t^\epsilon/ C, \ (NK)^{1/2} t^\epsilon/ C\right]. 
\] 
Further, $W_0(x)$ is supported on $[-1, 1]$ and satisfies the bound $ W_0^{(k)}\ll_k 1 $. For each $j>0$ (respectively $j<0$), $W_j(x)$ is supported on $[j, 4j/3]$ (respectively  $ [ 4j/3, j]$). Here we do not require the precise definition of the functions $W_j$. After applying Poisson and Voronoi summation formulae followed by a change of variable $ v\mapsto Kv$, we have the following expression for  $ S^+(N, C)$. 
 
\begin{lem} \label{S plus n c}
Let $N$, $K$ and $C$ be as above. Then,
\begin{align*}
 S^+(N, C)= c N^{1/2 -it} K \sum_{j \in \mathcal{F}} \sum_{n \ll K} \frac{\lambda_f(n)}{ n^{1/2}} \mathop{\sum \sum }_{ \substack{ C<q \leq 2C,\ (m, q)=1 \\ 1\leq |m| \ll qt^{1+\epsilon}/ N}} \frac{e\left( -\frac{a n}{q}\right)}{a q } G (q, m , n) +  O_A\left( t^{-A} \right), 
\end{align*} 
where $c$ is an absolute constant and 
 \[
  G (q, m , n) = \int_{\mathbb{R}} \left( \frac{n N}{q^2}\right)^{- i \tau}  \gamma(s, k) G_1 ( q, m, \tau) W_j (\tau) \ d \tau, 
 \] with 
\begin{align} \label{g1qmtau}
G_1 ( q, m, \tau) = \int_0^1 \int_{\mathbb{R}} V(v) U^\natural \left(\frac{N(ma-x)}{aq} , 1- i(t+Kv)\right) V^\natural \left( \frac{Nx}{a q},  \frac{1}{2}-i \tau+ iKv\right) dv \ dx. 
\end{align}
 \end{lem}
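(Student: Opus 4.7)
The plan is to start from the expression for $S^+(N,C)$ in Lemma~\ref{lemma S plus} and apply the Voronoi summation formula of Lemma~\ref{voronoi} to the inner $n$-sum, taken with the test function $g(y) = y^{iv} e(-xy/(aq)) V(y/N)$. The Mellin transform of $g$ satisfies $\tilde g(-s) = N^{-s+iv}\, V^{\natural}(Nx/(aq), -s+iv)$, where $V^{\natural}$ is the Fourier--Mellin transform from \eqref{FM}. Voronoi therefore replaces the $n$-sum by a Hecke sum twisted by $e(-\overline{m} n/q) = e(-an/q)$ (since the earlier Poisson step forced $m \equiv \overline{a} \pmod q$), multiplied by a factor $q N^{iv}$ and an integral
\[
G\!\left(\tfrac{n}{q^2}, \tfrac{x}{aq}\right) = c \int_{(\sigma)} \Big(\tfrac{nN}{q^2}\Big)^{-s} \gamma(s,k)\, V^{\natural}\!\left(\tfrac{Nx}{aq},\, -s+iv\right) ds,
\]
with $\gamma(s,k)$ as in \eqref{gamma s k}.

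The next step is to truncate the dual sum by a contour shift. Writing $s = \sigma + i\tau$, Stirling's formula (Lemma~\ref{stirling}) yields $\gamma(s,k) \ll_{f,\sigma} 1 + |\tau|^{2\sigma+1}$, while the integration-by-parts clause of Lemma~\ref{Fourier Mellin}, applied with $r = Nx/(aq) \asymp (NK)^{1/2}/q$ and $\beta = v - \tau$, gives
\[
V^{\natural}\!\left(\tfrac{Nx}{aq}, -s+iv\right) \ll_{j,\sigma} \Big(\tfrac{(NK)^{1/2}}{q|\tau|}\Big)^{j}
\]
uniformly for $|\tau| \gg 1+|v|$ and any $j \geq 0$. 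Pushing the contour to $\Re s = M$ and taking $j = 2M+3$, the combined estimate becomes $G(n/q^2, x/aq) \ll (NK)^{3/2}(K/n)^{M}$, so the contribution of $n \gg K t^{\epsilon}$ is negligible after letting $M \to \infty$. For the surviving range $n \ll K$, I would then shift the contour back to $\Re s = -1/2$, which produces the factor $(nN/q^2)^{-i\tau}$ appearing in the statement, and insert a smooth dyadic partition of unity $\{W_j\}_{j \in \mathcal{F}}$ in the $\tau$ variable covering the effective interval $|\tau| \ll (NK)^{1/2} t^{\epsilon}/C$. The two Fourier--Mellin transforms (the one inherited from Poisson in Lemma~\ref{lemma S plus} and the new one coming from Voronoi) then group together into the integral $G_1(q,m,\tau)$ defined in \eqref{g1qmtau}.

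Assembling the prefactors---the $q$ from Voronoi canceling part of the $(aq)^{-1}$, the $N^{1/2}$ produced by $(nN/q^2)^{-s}$ at $\sigma=-1/2$ combined with the pre-existing $N^{-it}$, and the extra $K$ from the rescaling $v \mapsto Kv$ in the outer $v$-integral---gives exactly the claimed formula. The step I expect to require the most care is the bookkeeping along the shifted contour $\Re s = -1/2$: one must keep track of the oscillatory factors $n^{-s}$ and $N^{-s}$ so that the argument of $(nN/q^2)^{-i\tau}$ is clean, and simultaneously verify that the Stirling tails for $\gamma(s,k)$ and the Fourier--Mellin decay for $V^{\natural}$ are both consistent with the dyadic support of the $W_j$; once these compatibilities are verified, the truncations $n \ll K$ and the partition $\mathcal{F}$ fall out without further difficulty.
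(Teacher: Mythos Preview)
Your proposal is correct and follows essentially the same route as the paper: apply Voronoi with the test function $g(y)=y^{iv}e(-xy/aq)V(y/N)$, use Stirling on $\gamma(s,k)$ and the integration-by-parts bound from Lemma~\ref{Fourier Mellin} on $V^\natural$ to truncate the dual sum at $n\ll K$ via a contour shift to $\Re s=M$, then move back to $\Re s=-1/2$, insert the dyadic partition $\{W_j\}$ in $\tau$, and rescale $v\mapsto Kv$. The prefactor bookkeeping you outline (the $q$ from Voronoi against $(aq)^{-1}$, the $(nN/q^2)^{1/2}$ producing $N^{1/2}n^{-1/2}$, and the Jacobian $K$) matches the paper exactly.
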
 In the next section we use Lemma \ref{Fourier Mellin} to analyse the integral $ G_1 ( q, m, \tau) $.

\section{Analysis of \texorpdfstring{$ G_1 ( q, m, \tau)$}{a}} \label{statinary phase analysis}

\subsection{Stationary phase analysis for \texorpdfstring{$ U^\natural$}{b} and \texorpdfstring{$V^\natural$}{c}} \label{stat phase ana}
 We apply Lemma \ref{Fourier Mellin} with $r = N(ma -x)/ aq$ and $ s = 1 - i(t+Kv)$ to get
\begin{equation}\label{U natural}
\begin{split}
U^{\natural} = &\frac{\sqrt{2\pi}e(1/8)}{(t+Kv)^{1/2}} \left( \frac{(t+Kv)aq}{2\pi eN(x-ma)} \right)^{-i(t+Kv)}\bigg[U_0\left(1, \frac{(t+Kv)aq}{2\pi N(x-ma)}\right)
\\& -\frac{i}{12(t+Kv)}U_1\left(1, \frac{(t+Kv)aq}{2\pi N(x-ma)}\right)\bigg] + O(t^{-5/2}).
\end{split}
\end{equation}

With $ r = Nx / aq $ and $ s= 1/2 - i \tau + i Kv$ we  also have
\begin{equation}\label{V natural}
\begin{split}
V^{\natural} = &\frac{\sqrt{2\pi}e(1/8)}{(\tau-Kv)^{1/2}} \left(\frac{(Kv-\tau)aq}{2\pi eNx}\right)^{i(Kv-\tau)} \bigg[V_0\left(\frac{1}{2}, \frac{(Kv-\tau)aq}{2\pi Nx}\right)
\\& - \frac{i}{12(\tau-Kv)}V_1\left(\frac{1}{2}, \frac{(Kv-\tau)aq}{2\pi Nx}\right) \bigg] + O\left(\min\left\lbrace \left(\frac{aq}{Nx}\right)^{5/2}, \frac{1}{|\tau-Kv|^{5/2}} \right\rbrace \right).
\end{split}
\end{equation}

We note that in \eqref{U natural}, $(t+Kv)\asymp t$, therefore the main term is bigger than the error term. However, in \eqref{V natural}, the main term will be smaller than the error term if $|\tau-Kv|<1$. Support of $V$ forces $Kv-\tau\asymp Nx/aq$. Therefore we bound $V^\natural$ by $O(1)$ when $Nx/aq< 1$, that is, $x<aq/N$. Support of $V$ restricts the length of the integral over $v$ to $\ll Nx/ Kaq \leq 1/K$ (as $ | Kv-\tau| aq/ 2\pi Nx \leq 2 \Rightarrow -4 \pi Nx/ aq K + \tau/K < v < 4 \pi Nx/ aq K + \tau/K $).  In the range $x \in [0, aq/N]$, using the facts $u^r U(u)\ll_r 1$, $v^r V(v)\ll_r 1$ and estimating integral over $v$  trivially, we have
\begin{align*}
\int_0^{aq/N} \int_{\mathbb{R}} V(v) U^\natural \left(\frac{N(x-ma)}{aq} , 1- i(t+Kv)\right) V^\natural \left( \frac{Nx}{a q},  \frac{1}{2}-i \tau+ iKv\right) dv \ dx \ll \frac{aq}{NK\sqrt{t}}. 
\end{align*}
When $x\in[aq/N, 1]$, we substitute the expressions of $U^\natural $ and  $V^\natural$ from \eqref{U natural} and \eqref{V natural} into equation \eqref{g1qmtau}. Then $G_1(q, m, \tau )$ is given by 
\begin{equation} \label{g1qmtau with error}
\begin{split}
G_1(q, m, \tau )=  &2\pi i\bigg(\frac{aq}{Nt}\bigg)^{1/2} \int_{aq/N}^1\frac{1}{x^{1/2}}\int_\BR \frac{t^{1/2}(t+Kv)^{-1/2}(Nx)^{1/2}}{(\tau-Kv)^{1/2}(aq)^{1/2}}\left( \frac{(t+Kv)aq}{2\pi eN(x-ma)} \right)^{-i(t+Kv)}
\\& \times\left(\frac{(Kv-\tau)aq}{2\pi eNx}\right)^{i(Kv-\tau)}
 V(v)\bigg(U_0V_0 - \frac{i}{12(t+Kv)}U_1V_0 - \frac{i}{12(\tau-Kv)}U_0V_1 
\\& - \frac{1}{144(t+Kv)(\tau-Kv)}U_1V_1 \bigg)\ dv\ dx + O\bigg(E(\tau) + t^{-5/2+\epsilon} + \frac{aq}{NK\sqrt{t}} \bigg),
\end{split}
\end{equation} 
where the arguments of $U_i, V_j$ are as in \eqref{U natural}, \eqref{V natural} and $E(\tau)$ is given by 
\begin{align} \label{etau}
E(\tau) = \frac{1}{t^{1/2}}\int_{aq/N}^1\int_1^2 \min\left\lbrace \left(\frac{aq}{Nx}\right)^{5/2}, \frac{1}{|\tau-Kv|^{5/2}} \right\rbrace dv dx. 
\end{align}

To estimate error term $ E(\tau)$, we first perform the $v$-integral by splitting into two cases. In first case, the first term of integrand in equation \eqref{etau} is smaller than second,
\begin{align} \label{vrange}
\left( \frac{aq}{N x} \right) < \frac{1}{| \tau - Kv| }
\Leftrightarrow \frac{\tau}{K} - \frac{Nx}{aqK} < v < \frac{ \tau }{K} + \frac{Nx}{aqK}. 
\end{align}

We observe that the range of integration over $v$ is bounded by $Nx/ aqK $. We split the range of $v$ in two parts, $|\tau| \leq 100 K$ and $|\tau|\geq 100 K$. When $|\tau| \leq 100 K$, we bound the length of $v$-integral by $Nx/aqk$. When $|\tau|\geq 100 K$, we bound the length of $v$-integral by $O(1)$. Hence in the first case, $E(\tau)$  bounded by
\begin{equation*}
\begin{split}
& \frac{1}{\sqrt{t}} \int_{aq/N}^1 \left( \frac{aq}{N x} \right)^{5/2} \frac{Nx}{aqK}  {\bf 1}_{ \tau \leq 100 K}  dx +  \frac{1}{\sqrt{t}} \int_{aq/N}^1 \left( \frac{aq}{N x} \right)^{3/2}  \left(\frac{aq}{N x}\right) {\bf 1}_{ \tau \geq 100 K} dx \\
\ll & \frac{1}{\sqrt{t}}\bigg(\frac{aq}{N}\bigg)^{3/2} \left(\frac{1}{K} \int_{aq/N}^1 \frac{1}{x^{3/2}}   {\bf 1}_{ \tau \leq 100 K}\  dx   +  \frac{1}{|\tau|} \int_{aq/N}^1 \frac{1}{x^{3/2}} {\bf 1}_{ \tau \geq 100 K}\ dx \right)  \ll  \frac{aqt^\epsilon}{NK\sqrt{t}} \min \left\lbrace 1, \frac{100K}{|\tau|} \right\rbrace.
\end{split}
\end{equation*}
Here we use the fact that for $|\tau| \geq 100 K$, in the range of $v$ this interval does not intersect $[1,2]$ unless $Nx/ aq \asymp |\tau|$. Here $ {\bf 1}_z$ denotes  the indicator function of the statement. The bound for $E(\tau)$ in the second case, when the second term of the integrand in equation \eqref{etau} dominates, is given by
\begin{equation*}
\begin{split}
& \frac{1}{\sqrt{t}} \mathop{\int_{aq/N}^1 \int_1^2}_{ |\tau - Kv| > Nx/aq} \frac{1}{|\tau - Kv|^{5/2}} dv \ dx  \ll  \frac{1}{\sqrt{t}}  \int_{aq/N}^1 \left( \frac{aq}{Nx}\right)^{3/2 + \epsilon} \left( \int_1^2 \frac{1}{|\tau - Kv|^{1-\epsilon}} dv \right) dx 
\\ \ll & \frac{1}{\sqrt{t}}\bigg(\frac{aq}{N}\bigg)^{3/2}  \int_{aq/N}^1 \frac{1}{x^{3/2}} dx \int_1^2 \left( {\bf 1}_{ \tau \leq 100 K}\frac{1}{|\tau - Kv|^{1-\epsilon}} +{\bf 1}_{ \tau \geq 100 K} \frac{1}{|\tau - Kv|^{1-\epsilon}}\right) dv 
\\ \ll & \frac{aqt^\epsilon}{NK\sqrt{t}} \min \left\lbrace 1, \frac{100K}{|\tau|} \right\rbrace.  
\end{split}
\end{equation*} 
Since we choose $K\ll t^{1 - \epsilon}$,
\[
E(\tau) + \frac{aq}{NK\sqrt{t}} + \frac{t^\epsilon}{t^{5/2}} \ll \frac{aq}{NK\sqrt{t}} + \frac{t^\epsilon}{K^2\sqrt{t}} \min \left\lbrace 1, \frac{100K}{|\tau|} \right\rbrace. 
\] 
Substituting this bound for $E(\tau)$ into equation \eqref{g1qmtau with error}, we have
\begin{equation} \label{final g1}
\begin{split}
G_1(q, m, \tau )=  &2\pi i\bigg(\frac{aq}{Nt}\bigg)^{1/2} \int_{aq/N}^1\frac{1}{x^{1/2}}\int_\BR \frac{t^{1/2}(t+Kv)^{-1/2}(Nx)^{1/2}}{(\tau-Kv)^{1/2}(aq)^{1/2}}\left( \frac{(t+Kv)aq}{2\pi eN(x-ma)} \right)^{-i(t+Kv)}
\\& \times \left(\frac{(Kv-\tau)aq}{2\pi eNx}\right)^{i(Kv-\tau)} V(v)\bigg(U_0V_0 - \frac{i}{12(t+Kv)}U_1V_0 - \frac{i}{12(\tau-Kv)}U_0V_1 \\&  - \frac{1}{144(t+Kv)(\tau-Kv)}U_1V_1 \bigg)\ dv\ dx
 + O\bigg(\frac{t^\epsilon}{K^2\sqrt{t}} \min \left\lbrace 1, \frac{100K}{|\tau|} \right\rbrace \bigg).
\end{split}
\end{equation} 

 \subsection{Step 3: Integration over \texorpdfstring{$v$}{d}} The integral over $v$ is a stationary phase integral of the form\\ $ \int_\mathbb{R} G(v) e(F(v))\ dv$ with 
\begin{equation} \label{define G}
\begin{split}
G(v)= \frac{t^{1/2}(t+Kv)^{-1/2}(Nx)^{1/2}}{(\tau-Kv)^{1/2}(aq)^{1/2}}V(v)\bigg(&U_0V_0 - \frac{i}{12(t+Kv)}U_1V_0 - \frac{i}{12(\tau-Kv)}U_0V_1 \\& - \frac{(t+Kv)\-}{144(\tau-Kv)}U_1V_1 \bigg),
\end{split}
\end{equation}
and 
\begin{align} \label{define F}
F(v)= - \frac{t+Kv}{2 \pi} \log \left( \frac{(t+Kv) aq}{2 \pi e N(x-ma)} \right) + \frac{Kv -\tau}{ 2 \pi} \log \left( \frac{(Kv - \tau) aq}{2 \pi e Nx} \right).  
\end{align}

The argument of $V_i, U_j$ are the same as in \eqref{U natural} and \eqref{V natural}. Our goal is to apply Lemma \ref{exponential inte} to the above integral. We have 
\[
F^\prime (v) = - \frac{K}{2 \pi} \log \left( \frac{(t+Kv) aq}{(x-ma) (Kv - \tau)} \right), 
\]
\begin{align} \label{F derivative}
F^{(j)} (v) = - \frac{(j-1)! (-K)^j}{2 \pi }  \left( \frac{1}{( t+Kv)^{j-1}} - \frac{1}{ (Kv-\tau)^{j-1}}\right) \ \ \ (j\geq 2).
\end{align} 

The stationary phase point is given by $F^\prime (v_0) = 0$, i.e.,
\[
v_0 = - \frac{(t+ \tau)x - \tau m a}{Kma}. 
\] 

For later calculations, it helps to note that 
\[
\frac{(Kv_0-\tau)aq}{2\pi Nx} = \frac{(t+Kv_0)aq}{2\pi N(x-ma)} = \frac{-(t+\tau)q}{2\pi Nm}.
\]

Since $V(v)$ is supported on $[1,2]$, we observe that the weight functions $V_i((Kv -\tau) aq/ 2 \pi Nx )$ vanish unless $ (Kv -\tau) \asymp Nx/aq $. Using this in equation \eqref{F derivative}, in support of the integral we have
\[
F^{(j)} (v) \asymp \frac{Nx}{aq} \left( \frac{Kaq}{Nx}\right)^j \ \ \ (j\geq 2) ,
\]  and 
\[
G^{(j)} (v) \ll \left(1+  \frac{Kaq}{Nx}\right)^j \ \ \ \left( \textrm{as} \ \  U^{(j)} (x) \ll _j 1 \ \  \textrm{and} \ \  V^{(j)} (x) \ll _j 1 \right). 
\] 
Using the expression of $v_0$, we can write the derivative of $F$ as
\begin{align} \label{first deri of F}
F^\prime (v) = \frac{K}{2 \pi} \log \left(1 + \frac{K(v_0 - v) }{(t+Kv) } \right) - \frac{K}{2 \pi} \log \left(1 + \frac{K(v_0 - v) }{(Kv- \tau) } \right) .
\end{align}
Using the fact that $N\ll t^{1+\epsilon}$ and $ (Kv -\tau) \asymp Nx/aq $, we have $0 < Kv -\tau \leq N/aq \ll K^{1/2} t^{1+\epsilon}/ N^{1/2} $. Since $V$ is supported on $[1,2]$, there is no stationary phase if $v_0 \notin [3/4, 9/4]$. Using the inequality $\log (1+x) \geq x/2$ for $ 0\leq x \leq 1$ in equation \eqref{first deri of F} in the support of integral, we obtain
\[
| F^\prime (v)| \gg K^{1-\epsilon} \min \left\lbrace 1, \frac{Kaq}{Nx} \right\rbrace,  
\]  
When $v_0 \notin [3/4, 9/4]$, we apply Lemma \eqref{exponential inte} with 
\begin{align} \label{condition FG}
\Theta_F = \frac{Nx}{aq},\ \Omega_F = \frac{Nx}{Kaq},\ \Omega_G= \min \left\lbrace 1,\ \frac{Nx}{ Kaq} \right\rbrace, \ \textrm{and} \  \Lambda = K^{1-\epsilon} \min \left\lbrace 1, \frac{Kaq}{Nx} \right\rbrace. 
\end{align} 
When $x<Kaq/N$, $\Omega_G= Nx/Kaq$ and $\Lambda=K^{1-\epsilon}$. Using the second statement of Lemma \ref{exponential inte}, we obtain
\begin{equation*}
\int_\mathbb{R} G(v)  e(F(v))\ dv  \ll \frac{aq}{Knx}.
\end{equation*}
Then
\begin{equation}\label{no sp x small}
\bigg(\frac{aq}{Nt}\bigg)^{1/2}\int_{aq/N}^{Kaq/N}\frac{aq}{Knx^{3/2}}\ dx\ll \frac{aq}{NKt^{1/2}}.
\end{equation}
On the other hand, if $x>Kaq/N$, then $\Omega_G=1$ and $\Lambda=K^{2-\epsilon}aq/Nx$, so that
\begin{equation*}
\int_\mathbb{R} G(v)  e(F(v))\ dv  \ll \frac{1}{K^2} \bigg(\frac{Nx}{Kaq}\bigg)^3.
\end{equation*}
Integrating over $x$,
\begin{equation}\label{no sp x large}
\bigg(\frac{aq}{Nt}\bigg)^{1/2}\int_{Kaq/N}^1\frac{1}{K^2} \bigg(\frac{Nx}{Kaq}\bigg)^3\ dx\ll \frac{t^\epsilon}{t^{1/2}}\bigg(\frac{N}{K^2aq}\bigg)^{5/2}.
\end{equation}

 If $v_0 \in [3/4, 9/4]$ there still may not be a stationary phase. When there is no stationary phase, by similar calculations as above, the error contribution is bounded by \eqref{no sp x small} and \eqref{no sp x large}.  When there is a stationary phase, we use the second statement of Lemma \ref{exponential inte} and estimate the integral over $x$ trivially. As earlier, if $aq/N \leq x\leq Kaq/ N$, then from equation \eqref{condition FG}, we have $\Omega_F= \Omega_G $ and $\Lambda= K^{1-\epsilon}$. In this case, the error term in the second part of Lemma \ref{exponential inte} is bounded by $O((aq)^{1/2}/(Nx)^{1/2}K)$. Then,
\begin{equation}\label{sp x small}
\bigg(\frac{aq}{Nt}\bigg)^{1/2}\int_{aq/N}^{Kaq/N}\frac{(aq)^{1/2}}{N^{1/2}xK}\ dx \ll t^{\epsilon}\frac{aq}{NKt^{1/2}}.
\end{equation}

On the other hand, if $x>Kaq/N$, then $\Omega_G=1$ and $\Lambda=K^{2-\epsilon}aq/Nx$. If we choose $K>N^{1/3}$, the corresponding error term in the second part of Lemma \ref{exponential inte} is bounded by $O((Nx/K^2aq)^{3/2})$. Then,
\begin{equation}\label{sp x large}
\bigg(\frac{aq}{Nt}\bigg)^{1/2}\int_{Kaq/N}^1\frac{N^{3/2}x}{(K^2aq)^{3/2}}\ dx \ll \frac{N}{aqK^3t^{1/2}}t^{\epsilon}.
\end{equation}
If we choose $K> N^{3/5}$, then the bound in \eqref{sp x large} is bigger than the bound in \eqref{no sp x large}. Considering the error term in \eqref{final g1} and the bounds \eqref{no sp x small} and \eqref{sp x large},  we conclude that 
 \begin{align} \label{second error in e c tau}
 \left( \frac{aq}{ tN } \right)^{1/2} \int_{aq/N}^1 \frac{1}{\sqrt{x}} \int_{\mathbb{R}} G(v) e(F(v)) dv dx & =  \left( \frac{aq}{ tN } \right)^{1/2} \int_{aq/N}^1 \frac{1}{\sqrt{x}} \frac{G(v_0) e( F(v_0) + 1/8)}{\sqrt{F''(v_0)}} dx  \notag\\
  & +O \left( t^\epsilon \frac{1}{K^2t^{1/2}}\min\left\lbrace 1, \frac{100K}{|\tau|}\right\rbrace + t^\epsilon\frac{QC}{NKt^{1/2}} + t^\epsilon\frac{N}{aqK^3t^{1/2}}\right). 
 \end{align} 
 
 By substituting the value of $v_0$ we have
 \[
 F( v_0)= \frac{-t+\tau}{ 2 \pi } \log \left(  \frac{-(t+\tau) q}{ 2 \pi e Nm}\right),\quad F^{\prime \prime} (v_0) = \frac{(Kma)^2}{2 \pi (t+\tau) (x-ma)x} , 
 \]
 
 and 
\begin{equation*}
\begin{split}
 G(v_0) = \frac{aq}{N} \bigg(\frac{-t(t+\tau)}{ma(x-ma)}\bigg)^{1/2} V\left( \frac{\tau}{K}- \frac{(t+\tau)x}{Kma}\right) &U\left(  \frac{-(t+\tau) q}{ 2 \pi Nm}\right)  V\left(  \frac{-(t+\tau) q}{ 2 \pi Nm}\right) \\& + O\bigg(\frac{aq}{Nx}\bigg)\delta\bigg(m\asymp \frac{qt}{N}\bigg). 
\end{split}
\end{equation*}
 
 Substituting the above and using the identity $U(z) V(z) = V(z)$, the main term of \eqref{second error in e c tau} can be written as 
 \[
 c_1\frac{t+\tau}{ K } \left( \frac{q}{-m N} \right)^{3/2} V\left(  \frac{-(t+\tau) q}{ 2 \pi Nm}\right) \left(  \frac{-(t+\tau) q}{ 2 \pi e Nm}\right)^{-i(t+\tau)} \int_{aq/N}^1 V\left( \frac{\tau}{K}- \frac{(t+\tau) x}{Kma}\right) dx,
 \]  
 with some absolute constant $c_1$ (note that $m <0$ and $|m|\asymp qt/N$). Since this stationary phase occurs when $aq/N<x<1$, we can replace the error term of $G(v_0)$ by $O(aqt^\epsilon/Nx^{1-\epsilon})$. The contribution of this error term is bounded by
 \begin{equation*}
 \bigg(\frac{aq}{Nt}\bigg)^{1/2}\int_0^1 \frac{1}{x^{1/2}}\frac{aqt^\epsilon}{Nx^{1-\epsilon}}\frac{(t+\tau)^{1/2}(x-ma)^{1/2}x^{1/2}}{Kma}\ dx \ll \frac{aqt^\epsilon}{NKt^{1/2}}.
 \end{equation*}
 We now extend the range of $x$-integral in the main term to $[0,1]$. This contributes an error term bounded by
 \begin{align*}
 &\ll\frac{1}{K}  \frac{1}{\sqrt{t + \tau}} \left\lbrace  \left( \frac{(t+\tau)q}{-m N} \right)^{3/2}  V\left(  \frac{-(t+\tau) q}{ 2 \pi Nm}\right)  \right\rbrace  \int_0^{aq/N} V\left( \frac{\tau}{K}- \frac{(t+\tau) x}{Kma}\right)   dx \\
 &\ll \frac{aqt^\epsilon}{NKt^{1/2}}.
 \end{align*} 
 
 This is dominated by the error term given in equation \eqref{final g1}. Collecting the  error terms  given in equations \eqref{final g1} and \eqref{second error in e c tau}, and recalling that $a \asymp (N/K)^{1/2}$ and $q \asymp C$, we define 
\begin{equation} \label{e c tau}
E(C, \tau) := t^\epsilon \frac{1}{t^{1/2} K^2}\min\left\lbrace 1, \frac{100K}{|\tau|}\right\rbrace + t^\epsilon\frac{QC}{NKt^{1/2}} + t^\epsilon\frac{N}{QCK^3t^{1/2}}. 
\end{equation} 

We observe that
\begin{equation} \label{int e c tau}
\int_{-\frac{(NK)^{1/2}}{C} t^{\epsilon}}^{\frac{(NK)^{1/2}}{C} t^{\epsilon}} E(C, \tau) d \tau \ll \frac{t^{\epsilon}}{Kt^{1/2}}\bigg(1 + \frac{N}{C^2K}\bigg)\ll \frac{N}{C^2K^2t^{1/2}}t^\epsilon. 
\end{equation} 

We summarize this section in the following lemmas.
\begin{lem} \label{decomposition g1 q m tau}
 Let $C$, $N$ and $K$  be as above. We have
\[
G_1(q, m, \tau) = G_2(q, m, \tau) + G_3(q, m, \tau) , 
\]  with 
\begin{align*}
G_2(q, m, \tau) =  \frac{c_2}{ (t+\tau)^{1/2}K } \left(  \frac{-(t+\tau) q}{ 2 \pi e Nm}\right)^{3/2-i(t+\tau)} V\left(  \frac{-(t+\tau) q}{ 2 \pi Nm}\right)  \int_{0}^1 V\left( \frac{\tau}{K}- \frac{(t+\tau) x}{Kma}\right) dx
\end{align*} 
for some absolute constant $c_2$ and
\[
 G_3(q, m, \tau) = G_1(q, m, \tau) - G_2(q, m, \tau) = O\left(E(C, \tau) t^\epsilon \right), 
\] where $ E(C, \tau)$ is given in equation \eqref{e c tau}. 
 
\end{lem}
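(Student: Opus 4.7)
The plan is to assemble the lemma from the computations carried out in Section 6, combining the two rounds of stationary phase analysis (first on $U^\natural$ and $V^\natural$ via Lemma \ref{Fourier Mellin}, then on the resulting $v$-integral via Lemma \ref{exponential inte}) and bookkeeping the error terms.

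First I would take the definition of $G_1(q,m,\tau)$ from equation \eqref{g1qmtau} and split the $x$-integral at $x = aq/N$. On $[0, aq/N]$ the argument $Nx/aq$ of $V^\natural$ is $\leq 1$, so the main term from Lemma \ref{Fourier Mellin} can be smaller than its error; here I bound $V^\natural$ trivially by $O(1)$, use the restriction $|Kv - \tau| \asymp Nx/aq$ coming from the support of $V$ to trim the $v$-range to length $\ll Nx/(Kaq)$, and integrate to obtain a contribution $\ll aq/(NK\sqrt{t})$, absorbed into $E(C,\tau)$. On $[aq/N, 1]$ I substitute the full asymptotic expansions \eqref{U natural}, \eqref{V natural}, yielding the principal expression \eqref{g1qmtau with error} plus an error $E(\tau)$ from the $V^\natural$ error piece; a routine split of $E(\tau)$ according to whether $(aq/Nx)^{5/2}$ or $|\tau-Kv|^{-5/2}$ dominates, and a further split on $|\tau| \lessgtr 100K$, yields the bound $\ll (aq/NK\sqrt{t}) + (t^\epsilon/K^2\sqrt{t})\min\{1, 100K/|\tau|\}$ displayed in \eqref{final g1}.

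Next I would analyse the inner $v$-integral $\int_\BR G(v) e(F(v))\,dv$ with $G$, $F$ as in \eqref{define G}, \eqref{define F}. Differentiating gives the derivative bounds $F^{(j)}\asymp (Nx/aq)(Kaq/Nx)^j$ and $G^{(j)}\ll (1+Kaq/Nx)^j$ for $j\geq 2$, and the stationary point $v_0 = -((t+\tau)x - \tau ma)/(Kma)$. When $v_0 \notin [3/4, 9/4]$ or when $V(v_0)=0$, one applies the first part of Lemma \ref{exponential inte} in the two regimes $x < Kaq/N$ and $x > Kaq/N$ (with parameters \eqref{condition FG}) and integrates in $x$ to get the bounds \eqref{no sp x small} and \eqref{no sp x large}; both sit inside $E(C,\tau)$ once we impose $K > N^{3/5}$. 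When the stationary point $v_0$ lies in the support of $V$, I apply part (2) of Lemma \ref{exponential inte} to extract the leading term $G(v_0) e(F(v_0) + 1/8)/\sqrt{F''(v_0)}$, whose error (again in the two regimes for $x$) gives \eqref{sp x small} and \eqref{sp x large}.

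Finally I would simplify the surviving main term using the explicit values $F(v_0) = -\tfrac{t+\tau}{2\pi}\log\!\bigl(-(t+\tau)q/(2\pi e Nm)\bigr)$, $F''(v_0) = (Kma)^2/\bigl(2\pi(t+\tau)(x-ma)x\bigr)$, and the identity $U(z)V(z) = V(z)$, to recognize $G_2(q,m,\tau)$; the surviving $x$-integral is $\int_{aq/N}^1 V\!\bigl(\tau/K - (t+\tau)x/(Kma)\bigr)dx$, which I extend to $\int_0^1$ at the cost of $\ll aq/(NK\sqrt{t})\cdot t^\epsilon$, again swallowed by $E(C,\tau)$. Collecting all error contributions and using $a\asymp (N/K)^{1/2}$, $q\asymp C$ gives $G_3(q,m,\tau) = O(E(C,\tau) t^\epsilon)$ with $E(C,\tau)$ as in \eqref{e c tau}. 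The bookkeeping of the error terms across the two regimes $x \lessgtr Kaq/N$ and the interaction between the phase of $V^\natural$ and the variable $v$ is the most delicate step, since it is where the hypothesis $K > N^{3/5}$ enters and where one must verify that the error from the pre-stationary-phase expansion does not overwhelm the asymptotic.
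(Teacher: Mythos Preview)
Your proposal is correct and follows essentially the same route as the paper: the two-stage stationary phase (first Lemma~\ref{Fourier Mellin} on $U^\natural,V^\natural$, then Lemma~\ref{exponential inte} on the $v$-integral), the split of the $x$-range at $aq/N$ and $Kaq/N$, the explicit evaluation of $F(v_0),F''(v_0),G(v_0)$, and the extension of the $x$-integral to $[0,1]$ all match Section~\ref{statinary phase analysis} step for step, including the appearance of the constraint $K>N^{3/5}$ to subordinate \eqref{no sp x large} to \eqref{sp x large}. The only small item you do not mention is the $O(aq/Nx)$ lower-order piece of $G(v_0)$ coming from the secondary $U_i V_j$ terms in \eqref{define G}, but its contribution is again $\ll aq t^\epsilon/(NK\sqrt{t})$ and is absorbed in $E(C,\tau)$ exactly as you describe for the other errors.
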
 
Substituting the decomposition of $ G_1(q, m, \tau)$ in Lemma \eqref{S plus n c}, we get the following result.

\begin{lem} \label{lemma S plus nc}
We have
 \[
 S^{+}(N, C) = \sum_{ j \in  \mathcal{F} } \{ S_{1, j}^{+}(N, C)  + S_{2, j}^{+}(N, C)\} + O_A(t^{-A}),
 \] where
\begin{align*}
 S_{\ell, j}^{+}(N, C)=  N^{1/2 -it} K  \sum_{n \ll K} \frac{\lambda_f(n)}{ n^{1/2}} \mathop{\sum \sum }_{ \substack{ C<q \leq 2C,\ (m, q)=1 \\ 1\leq |m| \ll qt^{1+\epsilon}/ N}} \frac{1}{aq}e\left( -\frac{a n}{q}\right) G_{\ell, j} (q, m , n) +  O_A\left( t^{-A} \right), 
 \end{align*} where for $\ell = 2, 3$ we have
 \[
  G_{\ell, j} (q, m , n) = \int_{\mathbb{R}} \left( \frac{n N}{q^2}\right)^{- i \tau}  \gamma(s, k) G_{\ell} ( q, m, \tau) W_j (\tau) \ d \tau, 
 \] 
 with  $ G_{\ell} ( q, m, \tau)$ is as defined in the above lemma.
\end{lem}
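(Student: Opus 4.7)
The plan is to obtain this lemma as a direct consequence of Lemma~\ref{decomposition g1 q m tau} together with the expression for $S^+(N,C)$ given in Lemma~\ref{S plus n c}. The key observation is that in the formula of Lemma~\ref{S plus n c}, the $\tau$-dependence of the weight $G(q,m,n)$ is packaged as
\[
G(q,m,n) = \int_{\mathbb{R}} \left(\frac{nN}{q^2}\right)^{-i\tau} \gamma(s,k)\, G_1(q,m,\tau)\, W_j(\tau)\, d\tau,
\]
which is manifestly linear in the input $G_1$.

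First I would invoke Lemma~\ref{decomposition g1 q m tau} to split $G_1(q,m,\tau) = G_2(q,m,\tau) + G_3(q,m,\tau)$, where $G_2$ encodes the contribution of the stationary phase point $v_0$ together with the $x$-integral extended from $[aq/N,1]$ to $[0,1]$, and $G_3$ aggregates all the error terms bounded by $E(C,\tau) t^\epsilon$ as in equation~\eqref{e c tau}. By linearity of the integral transform above, this immediately yields
\[
G(q,m,n) = G_{2,j}(q,m,n) + G_{3,j}(q,m,n),
\]
with $G_{\ell,j}$ exactly as defined in the statement of the lemma.

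Substituting this decomposition back into the formula of Lemma~\ref{S plus n c} and distributing the outer sums over $j \in \mathcal{F}$, $n$, $q$, and $m$ then yields the claimed expression. The error term $O_A(t^{-A})$ is inherited verbatim from Lemma~\ref{S plus n c}; the finiteness of $\mathcal{F}$ (which has $O(\log t)$ elements by construction of the partition of unity $\{W_j\}$) ensures that no extraneous factors appear upon exchanging the sum over $j$ with the splitting.

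Since all the genuine analytic content, namely the stationary phase analysis of $U^{\natural}$ and $V^{\natural}$ from subsection~\ref{stat phase ana}, the treatment of the $v$-integral via Lemma~\ref{exponential inte}, and the bookkeeping leading to the error bound~\eqref{e c tau}, has already been carried out in Section~\ref{statinary phase analysis}, the present lemma requires no new estimates and amounts to a regrouping of terms. The main conceptual point to highlight is that the utility of this splitting will only manifest itself in the forthcoming Cauchy--Schwarz step, where $G_{2,j}$ supplies the oscillatory main term that interacts productively with the Poisson summation on the $n$-sum, while the contribution of the $G_{3,j}$-piece is absorbed using the integrated estimate~\eqref{int e c tau}. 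Hence at this stage there is no analytic obstacle; the only minor care needed is to ensure that the truncation $n \ll K$ (which produced the $O_A(t^{-A})$ error in Lemma~\ref{S plus n c}) is compatible with both pieces $G_{2,j}$ and $G_{3,j}$ after the decomposition, which is immediate since the truncation was introduced prior to and independently of the stationary phase analysis.
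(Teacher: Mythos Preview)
Your proposal is correct and matches the paper's approach exactly: the paper simply states ``Substituting the decomposition of $G_1(q,m,\tau)$ in Lemma~\ref{S plus n c}, we get the following result,'' and your write-up spells out precisely this substitution via the linearity of the $\tau$-integral transform. Your additional remarks on the finiteness of $\mathcal{F}$ and the compatibility of the $n\ll K$ truncation are valid sanity checks but not strictly needed, as the paper treats this lemma as a purely formal regrouping.
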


\section{Cauchy inequality and Poisson summation formula (Step 4:)}

\subsection{First application of Cauchy inequality and Poisson summation formula}

In this subsection we shall estimate 
\[
S_{2}^{+}(N, C) : = \sum_{j \in \mathcal{F}} S_{2, j}^{+}(N, C),
\] where $S_{2, j}^{+}(N, C) $ is given in Lemma \ref{lemma S plus nc}.  Taking the dyadic divison of summation over $n$ and using the trivial bound for the gamma function, we have
\begin{align*}
S_{2}^{+}(N, C) \leq t^\epsilon N^{1/2} K \int_{-\frac{(NK)^{1/2}}{C} t^{\epsilon}}^{\frac{(NK)^{1/2}}{C} t^{\epsilon}}  \sum_{\substack{ 1\leq L \ll K \\ L \textrm{dyadic}}} \sum_{n } \frac{|\lambda_f(n)|}{ n^{1/2}} U\left( \frac{n}{L}\right) \bigg| \mathop{\sum \sum }_{ \substack{ C<q \leq 2C,\ (m, q)=1 \\ 1\leq |m| \ll qt^{1+\epsilon}/ N}} \frac{e_q(-an)}{a q^{1- 2 i \tau} } G_{2, j} (q, m , \tau) \bigg|  d\tau. 
\end{align*}
Here and afterwards, $e_q(\alpha)=e(\alpha/q)$. We now apply Cauchy inequality to the $n$-sum to get
\begin{align}  \label{S 2 plus nc}
S_{2}^{+}(N, C)  \leq t^\epsilon N^{1/2} K \int_{-\frac{(NK)^{1/2}}{C} t^{\epsilon}}^{\frac{(NK)^{1/2}}{C} t^{\epsilon}}  \sum_{\substack{ 1\leq L \ll K \\ L \textrm{dyadic}}} L^{1/2} \left[S_{2}^{+}(N, C, L, \tau) \right]^{1/2} \ d \tau, 
\end{align} where
\begin{align*}
S_{2}^{+}(N, C, L, \tau) = \sum_{n \in  \mathbb{Z}}  U\left( \frac{n}{L}\right) \bigg| \mathop{\sum \sum }_{ \substack{ C<q \leq 2C,\ (m, q)=1 \\ 1\leq |m| \ll qt^{1+\epsilon}/ N}} \frac{e_q(-an)}{a q^{1- 2 i \tau} } G_{2, j} (q, m , \tau) \bigg|^2. 
\end{align*}
Expanding the absolute value squared and interchanging the summation, we obtain 
\begin{align*}
S_{2}^{+}(N, C, L, \tau) = \mathop{\sum \sum }_{ \substack{ C<q \leq 2C,\ (m, q)=1 \\ 1\leq |m| \ll qt^{1+\epsilon}/ N}}   \mathop{\sum \sum }_{ \substack{ C<q_1 \leq 2C,\ (m_1, q_1)=1 \\ 1\leq |m_1| \ll q_1t^{1+\epsilon}/ N}}   \frac{1}{a a_1 q^{1- 2 i \tau} q^{1+2 i \tau}} G_{2, j} (q, m , \tau)  \overline{G_{2, j} (q_1, m_1 , \tau) } D,
\end{align*} 
where
\begin{align} \label{second poisson}
D= \sum_n \frac{1}{n} U\left( \frac{n}{L}\right) e\left( - \frac{a n}{q}\right) e\left(  \frac{a_1 n}{q_1}\right).
\end{align} 

Writing $n = \alpha + l q q_1$ and applying Poisson summation to the $l$-sum,
\begin{align*}
D=  \frac{1}{q q_1} \sum_{n \in \mathbb{Z}}  \sum_{\alpha {\bmod} q q_1} e \left( - \frac{a \alpha}{q} + \frac{a_1 \alpha}{q_1}  + \frac{n \alpha}{ qq_1} \right) \int_{\mathbb{R}} \frac{1}{  z} U\left(z\right) e\left(- \frac{ n Lz}{q q_1} \right) dz.
\end{align*} 

Integrating by parts, we observe that the integral is negligibly small if $\frac{q q_1}{ n L} <1$ i.e. if $n\gg \frac{C^2 t^\epsilon}{L}$. Evaluating the exponential sum, we have 
\[
D = \sum_{\substack{n \ll C^2 t^\epsilon/L \\  a_1 q - a q_1 + n \equiv 0 \bmod q q_1}} \int_{\mathbb{R}} \frac{1}{  z} U\left(z\right) e\left(- \frac{ n Lz}{q q_1} \right) dz + O_A(t^{-A}). 
\] 

Substituting the bound for $D$, we get that up to a negligible error, the sum  $ S_{2}^{+}(N, C, L, \tau)$ is dominated by
\begin{align}\label{counting}
& \frac{K}{N C^2} E(C, \tau)^2 \mathop{\sum \sum }_{ \substack{ C<q \leq 2C,\ (m, q)=1 \\ 1\leq |m| \ll qt^{1+\epsilon}/ N}}   \mathop{\sum \sum }_{ \substack{ C<q_1 \leq 2C,\ (m_1, q_1)=1 \\ 1\leq |m_1| \ll q_1t^{1+\epsilon}/ N}}   \sum_{\substack{n \ll C^2 t^\epsilon/L \\  a_1 q - a q_1 + n \equiv 0 \bmod q q_1}} 1
\end{align}

We have to analyze the cases $n=0$ and $n\neq0$ separately. When $n=0$, the congruence condition above gives $q=q_1$ and $a=a_1$. For a given $m$, this fixes $m_1$ up to a factor of $t^{1+\epsilon}/N$. Moreover, in the case $Q^2<K$, that is, $K>N^{1/2}$, we'll have only $n=0$ for $L>C^2$. Therefore for $n\neq0$, we will let $L$ go up to $\min\{C^2,K\}$. We note that the congruence condition implies $q|(n-aq_1)$ and $q_1|(n+a_1q)$. Since $a$ and $a_1$ lie in an interval of length $q$, fixing $n, q$ and $q_1$ fixes both $a$ and $a_1$. That saves $q, q_1$ in the $m, m_1$-sums respectively. Moreover, since $q_1|(n+aq)$, there are only $t^\epsilon$-many $q_1$ for a fixed $q$. Then,
\begin{equation*}
S_2(N,C,L,\tau) \ll t^{\epsilon} \frac{Kt^2 E(C,\tau)^2}{N^3}\bigg[1+\frac{C}{L}\bigg]
\end{equation*}

Therefore,
\begin{equation*}
S_2(N,C)\ll t^{\epsilon}N^{1/2}K \int_{-\frac{(NK)^{1/2}t^{\epsilon}}{C}}^{\frac{(NK)^{1/2}t^{\epsilon}}{C}}\bigg[\underset{dyadic}{\sum_{1\leq L \ll Kt^{\epsilon}}} L^{1/2}.\frac{K^{1/2}tE(C,\tau)}{N^{3/2}}+\underset{dyadic}{\sum_{1\leq L \ll \min\{C^2,K\}t^{\epsilon}}}\frac{K^{1/2}tC^{1/2}E(C,\tau)}{N^{3/2}}\bigg]\ d\tau
\end{equation*}

If $K\geq N^{1/3}$, then the contribution of the second term is smaller than that of the first. So we neglect the second term. Summing over $L$, and using \eqref{int e c tau}, $S_2(N,C)\ll t^{1/2+\epsilon}/C^2$. Multiplying by $N^{1/2}/K$ and summing over $C$ dyadically, 
\begin{equation}\label{S2}
\frac{S_2(N)}{N^{1/2}} \ll t^{1/2+\epsilon}\frac{N^{1/2}}{K}.
\end{equation}

% Substituting the bound of $ S_{2}^{+}(N, C, L, \tau)$ in equation \eqref{S 2 plus nc} and using equation \eqref{int e c tau}, we have 
% \begin{align} \label{S2 plus N C}
% S_{2}^{+}(N, C) & \ll t^\epsilon N^{1/2} K \int_{-\frac{(NK)^{1/2}}{C} t^{\epsilon}}^{\frac{(NK)^{1/2}}{C} t^{\epsilon}}  E(C, \tau)  \sum_{\substack{ 1\leq L \ll K \\ \text{ dyadic }}} L^{1/2} \frac{K^{1/2} C}{ (L N)^{1/2} } \frac{t}{N} \notag\\
% & \ll  t^\epsilon  \frac{t}{N} K^{3/2} C \frac{t^{\epsilon}}{\sqrt{NK}} \left\lbrace 1 +  \frac{N}{ C^2 K^{3/2}} \right\rbrace.
% \end{align} 

\subsection{Second application of Cauchy inequality and Poisson summation formula}
We shall estimate 
\[
S_{1}^{+}(N, C) : = \sum_{j \in \mathcal{F}} S_{1, j}^{+}(N, C).
\]
 As in the previous case, we split the summation over $n$  into dyadic segments. This time we keep the $\tau$ integral inside the absolute value to get
 \begin{align*}
 S_{1, j}^{+}(N, C) & \leq t^\epsilon N^{1/2} K   \sum_{\substack{ 1\leq L \ll K \\ \text{dyadic}}} \sum_{n } \frac{|\lambda_f(n)|}{ n^{1/2}} U\left( \frac{n}{L}\right) 
 \bigg| \int_{\mathbb{R}} (nN)^{- i \tau} \gamma(-1/2 + i \tau, k) \\
& \hspace{2cm}\times \mathop{\sum \sum }_{ \substack{ C<q \leq 2C,\ (m, q)=1 \\ 1\leq |m| \ll qt^{1+\epsilon}/ N}} \frac{e_q(-an)}{a q^{1- 2 i \tau} } G_2(q, m, \tau) W_j (\tau) d\tau \bigg|  .
\end{align*} 
We apply the Cauchy-Schwarz inequality to get
\begin{align} \label{S1j plus nc}
S_{1, j}^{+}(N, C) & \leq t^\epsilon N^{1/2} K   \sum_{\substack{ 1\leq L \ll K \\ \text{dyadic}}} L^{1/2} \left[ S_{1, j}^{+}(N, C, L)\right]^{1/2}, 
\end{align} 
where  $ S_{1, j}^{+}(N, C, L) $ is given by 
\begin{align*}
\sum_{n \in \mathbb{Z}} \frac{1}{n} U\left( \frac{n}{L}\right) \bigg| \int_{\mathbb{R}} (nN)^{- i \tau} \gamma(-1/2 + i \tau, k) \mathop{\sum \sum }_{ \substack{ C<q \leq 2C,\ (m, q)=1 \\ 1\leq |m| \ll qt^{1+\epsilon}/ N}} \frac{e_q(-an)}{a q^{1- 2 i \tau} } G_2(q, m, \tau) W_j (\tau) d\tau \bigg|^2  .
\end{align*} 
 
 Expanding the absolute value squared and interchanging the summation over $n$, $ S_{1, j}^{+}(N, C, L) $ becomes
\begin{align*}
 &\iint_{\mathbb{R}^2} N^{ - i( \tau -\tau^\prime)} \gamma(-\frac{1}{2} + i \tau, k) \overline{\gamma(-\frac{1}{2} + i \tau^\prime, k)} W_j(\tau) \overline{W_j(\tau^\prime)} \mathop{\sum \sum }_{ \substack{ C<q \leq 2C,\ (m, q)=1 \\ 1\leq |m| \ll qt^{1+\epsilon}/ N}} \\
 & \mathop{\sum \sum }_{ \substack{ C<q_1 \leq 2C,\ (m_1, q_1)=1 \\ 1\leq |m_1| \ll q_1t^{1+\epsilon}/ N}} \frac{1}{a a_1 q^{1 - 2 i \tau} q_1^{1 - 2 i \tau^\prime}} G_2(q, m, \tau) \overline{G_2(q_1, m_1, \tau^\prime)}\ D\ d\tau\ d\tau', 
\end{align*} 

where
\begin{align*}
D= \sum_n \frac{1}{n^{1- i(\tau - \tau^\prime)}} U\left( \frac{n}{L}\right) e\left( - \frac{a n}{q}\right) e\left(  \frac{a_1 n}{q_1}\right). 
\end{align*} 

As in the previous case, breaking the summation modulo $q q_1$, applying Poisson summation formula, and making a change of variable $( \alpha + y q q_1)/ L = w$, we get  
\begin{align*}
 D=  \sum_{\substack{n \in \mathbb{Z} \\  a_1 q - a q_1 + n \equiv 0 \bmod q q_1}} L^{-i ( \tau - \tau^\prime)} U^\natural \left(\frac{nL}{ q q_1}, - i(\tau - \tau^\prime) \right).
\end{align*} 
where $ U^\natural $ is defined by equation \eqref{FM}. Recall that $ | \tau - \tau^\prime | \ll (NK)^{1/2} t^\epsilon/ C$ and $q, q_1 \asymp C$. Applying Lemma \ref{Fourier Mellin} we observe that the integral is negligibly small if $n\gg   C(NK)^{1/2} t^\epsilon/ L $.  Substituting  value of $D$ we have the following lemma.

\begin{lem} \label{lemma s 1 j ncl}
We have 
%The sum $ S_{1, j}^{+}(N, C, L) $ is bounded above by 
\begin{align} \label{S0 and S1}
 S_{1, j}^{+}(N, C, L) &  = \frac{K}{N C^2}  \mathop{\sum \sum }_{ \substack{ C<q \leq 2C,\ (m, q)=1 \\ 1\leq |m| \ll qt^{1+\epsilon}/ N}}   \mathop{\sum \sum }_{ \substack{ C<q_1 \leq 2C,\ (m_1, q_1)=1 \\ 1\leq |m_1| \ll q_1t^{1+\epsilon}/ N}}  \bigg\lbrace \mathfrak{I} (0) +  \sum_{\substack{0 \neq n \ll  C(NK)^{1/2} t^\epsilon/L \\  a_1 q - a q_1 + n \equiv 0 \bmod q q_1}}  \mathfrak{I} (n) \bigg\rbrace  \notag \\
 & + O_A\left(t^{-A} \right) : = S_{1, j}^{+}(N, C, L, 0) + S_{1, j}^{+}(N, C, L, 1) + O_A\left(t^{-A} \right), 
\end{align} 

where $ S_{1, j}^{+}(N, C, L, 0)$  corresponds to contribution  of $  \mathfrak{I} (0)$, $ S_{1, j}^{+}(N, C, L, 1)$  corresponds to contribution  of $  \mathfrak{I} (n)$ for  $n \neq 0 $, and 
\begin{align*}
\mathfrak{I} (n) &= \iint_{\mathbb{R}^2}  \gamma(-\frac{1}{2} + i \tau, k) \overline{\gamma(-\frac{1}{2} + i \tau^\prime, k)} W_j(\tau) \overline{W_j(\tau^\prime)}  \frac{(LN)^{ - i( \tau -\tau^\prime)}}{ q^{- 2 i \tau} q_1^{ 2 i \tau^\prime}}  \\
& \hspace{30pt} G_2(q, m, \tau) \overline{G_2(q_1, m_1, \tau^\prime)}  U^\natural \left(\frac{nL}{ q q_1}, - i(\tau - \tau^\prime) \right)\ d\tau\ d\tau'.
\end{align*}
\end{lem}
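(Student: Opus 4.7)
The plan is to carry out the same Cauchy--Poisson maneuver as in the preceding subsection, adapted to the case where the $\tau$-integral is kept inside the absolute value. Starting from the bound on $S_{1,j}^+(N,C)$ displayed in \eqref{S1j plus nc}, an application of the Cauchy--Schwarz inequality over $n$ (after a dyadic decomposition) reduces matters to controlling the square sum $S_{1,j}^+(N,C,L)$. I would expand the modulus squared as a product of the $\tau$-integral with the complex conjugate of the $\tau'$-integral, and then interchange the outer $n$-sum with all other summations and integrations, which is justified because everything in sight is absolutely convergent thanks to the fast decay of $\gamma(\cdot,k)W_j(\cdot)$ in vertical strips and the compact support of $U(n/L)$.

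After this interchange, the inner $n$-sum takes the shape
\begin{equation*}
D=\sum_{n\in\BZ} \frac{1}{n^{1-i(\tau-\tau')}} U\!\left(\frac{n}{L}\right) e\!\left(-\frac{an}{q}+\frac{a_1 n}{q_1}\right).
\end{equation*}
I would split $n=\alpha+lqq_1$ with $\alpha$ running modulo $qq_1$ and $l$ over $\BZ$, apply Poisson summation in $l$, and perform the change of variable $w=(\alpha+yqq_1)/L$ to recognize the resulting integral as the Fourier--Mellin transform $U^\natural$ at the point $(nL/(qq_1),-i(\tau-\tau'))$. The character sum over $\alpha \bmod qq_1$ evaluates to $qq_1$ when $a_1q - aq_1 + n \equiv 0 \pmod{qq_1}$ and vanishes otherwise, which is exactly the congruence asserted in the lemma.

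To justify the truncation $|n|\ll C(NK)^{1/2}t^\epsilon/L$, I would invoke the second statement of Lemma~\ref{Fourier Mellin} with $\beta=\tau-\tau'$ (of size at most $(NK)^{1/2}t^\epsilon/C$ by the support of $W_j$) and $r=nL/(qq_1)$ of size $nL/C^2$. The lemma then yields $U^\natural\ll_j(C(NK)^{1/2}/(nL))^j$ for every $j$, so the contribution of $n$ outside the stated range is $O_A(t^{-A})$. Combining all the pieces --- with the overall prefactor $K/(NC^2)$ coming from the $G_2$ size $(aq)^{-1}\asymp (NK)^{-1/2}C^{-1}$ squared, multiplied by the $qq_1\asymp C^2$ from Poisson --- produces the display \eqref{S0 and S1}, and one separates the $n=0$ and $n\neq 0$ terms into $S^+_{1,j}(N,C,L,0)$ and $S^+_{1,j}(N,C,L,1)$ respectively.

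The main subtlety is keeping track of how the congruence $a_1q-aq_1+n\equiv0\pmod{qq_1}$ constrains the admissible $a,a_1$ (recall $a,a_1$ are determined by $q,q_1$ through the first Poisson step), since this is precisely what will make the subsequent counting argument effective; however, at the current level of the lemma this is purely a bookkeeping matter and no new estimation is required. The error coming from the tails of the $\tau,\tau'$ integrals is absorbed via the rapid decay of $W_j$ outside $[(NK)^{1/2}t^{\epsilon}/C,\,(NK)^{1/2}t^\epsilon/C]$, contributing only $O_A(t^{-A})$ as stated.
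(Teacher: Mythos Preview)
Your proposal is correct and follows essentially the same route as the paper: Cauchy--Schwarz over $n$, expansion of the square, Poisson summation in $n$ modulo $qq_1$, evaluation of the resulting character sum to obtain the congruence, and truncation of the dual $n$-sum via Lemma~\ref{Fourier Mellin}. One small slip: your accounting for the prefactor $K/(NC^2)$ is garbled. Since $a\asymp (N/K)^{1/2}$ and $q\asymp C$, one has $(aq)^{-1}\asymp K^{1/2}N^{-1/2}C^{-1}$ (not $(NK)^{-1/2}C^{-1}$), and squaring this already gives $K/(NC^2)$; there is no additional factor of $qq_1$ from Poisson, as the $1/(qq_1)$ from the change of variable cancels against the $qq_1$ from the character sum when the congruence holds. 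This is purely bookkeeping and does not affect the argument.
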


By using the value of $ G_2(q, m, \tau) $ as given in Lemma \ref{decomposition g1 q m tau}, we have
\begin{align} \label{integral in second cauchy}
\mathfrak{I} (n) &= \frac{|c_1|^2}{K^2}\iint_{\mathbb{R}^2}  \gamma(-\frac{1}{2} + i \tau, k) \  \overline{\gamma(-\frac{1}{2} + i \tau^\prime, k)} W_j(q, m, \tau) \overline{W_j(q_1, m_1, \tau^\prime)}  \frac{(LN)^{ - i( \tau -\tau^\prime)}}{ q^{- 2 i \tau} q_1^{ 2 i \tau^\prime}}  \notag\\
& \hspace{0pt} \times \left(- \frac{(t+\tau)q}{2 \pi e Nm} \right)^{ -i (t+\tau)} \left(- \frac{(t+\tau^\prime)q_1}{2 \pi e Nm_1} \right)^{ -i (t+\tau^\prime)} U^\natural \left(\frac{nL}{ q q_1}, - i(\tau - \tau^\prime) \right)\ d\tau\ d\tau',  
\end{align} 

where
\begin{align} \label{w j q m tau}
W_j(q, m, \tau) =  \frac{c_1}{ (t+\tau)^{1/2}K } \left(  \frac{-(t+\tau) q}{ 2 \pi e Nm}\right)^{3/2} V\left(  \frac{-(t+\tau) q}{ 2 \pi Nm}\right)  \int_{0}^1 V\left( \frac{\tau}{K}- \frac{(t+\tau) x}{Kma}\right) dx. 
\end{align} We have 
\[
\frac{\partial}{\partial \tau} W_j(q, m, \tau) \ll \frac{1}{t^{1/2} |\tau|}. 
\] We shall first evaluate the integral transform $ U^\natural$. For $ n=0$, using Lemma \ref{Fourier Mellin} with $r= 0$ and $\beta =| \tau - \tau^\prime|$ we observe that integral $U^\natural$ is negligibly small if $| \tau - \tau^\prime| \gg t^{\epsilon}$.  We denote
\[
\frac{\tau}{K}- \frac{(t+\tau) x}{Kma} = P
\] The integrand is non-vanishing only when $ P \in [1,2]$. Hence the range of the $x$-integral is of size
\begin{align*}
\frac{Kma}{t+\tau} \asymp \frac{Kma}{t}  \asymp \frac{C K^{1/2}}{ N^{1/2}},  \ \ \ \textrm{as}  \ \ a \asymp \sqrt{\frac{N}{K}}  \  \  \ \textrm{and} \ \ m\ll \frac{qt}{N}. 
\end{align*} 
Substituting the above bound and using $u^{3/2} V(u)\ll 1$, we obtain 
\begin{align*}
W_j(q, m, \tau) \ll \frac{t^\epsilon}{\sqrt{t}}  \frac{C K^{1/2}}{ N^{1/2}}.
\end{align*}

Now, using  above bound for $ W_j(q, m, \tau)$, trivial bound for Gamma function $\gamma(s, k)\ll 1$, $u^{3/2} V(u) \ll 1$, along with the fact that $\tau\in[ -(NK)^{1/2} t^\epsilon/ C, (NK)^{1/2} t^\epsilon/ C]$,  we obtain that the contribution of the term $n= 0$ is bounded above by 
\begin{align} \label{E 1 C 0}
\mathfrak{I} (0) \ll \frac{1}{K^2}\frac{ (NK)^{1/2} t^\epsilon}{ C t}  \frac{C K^{1/2}}{ N^{1/2}} \asymp \frac{t^{\epsilon}}{Kt} .
%:= E_1 (C, 0)  \left(\ \ \textrm{say} \right).
\end{align}
We need to save little more, as we do in the following subsection.

\subsection{Analysis of \texorpdfstring{$\mathfrak{I} (0)$}{e}} \label{refine}
$\mathfrak{I} (0)$ is negligibly small if $| \tau - \tau^\prime| \gg t^{\epsilon}$. Writing $ \tau^\prime = \tau + h $, with $|h| \ll t^\epsilon$ and using Stirling's approximation for Gamma function  (Lemma \ref{stirling}),  we have
\begin{align} \label{I 0}
\mathfrak{I} (0) = \frac{c}{K^2} \int_{|h| \ll t^\epsilon} \int_{|\tau| \ll (NK)^{1/2}/ C} G(\tau, h) e(F (\tau, h)) d \tau \ dh, 
\end{align} 
where 
\begin{align*}
 G(\tau , h ) = \Phi_+(\tau) \overline{ \Phi_+(\tau + h)} W_j(q, m, \tau) \overline{W_j(q_1, m_1, \tau+ h)}
 \end{align*} 
and
\begin{align*}
F (\tau, h) &= 2 \tau \log \left(\frac{\tau}{ e \pi} \right) - 2 ( \tau+ h) \log \left(\frac{\tau +h}{ e \pi} \right) - h \log LN + 2 \tau \log q - 2 (\tau + h) \log q_1  \\
&  \hspace{1cm} \times - (t+ \tau) \log \left( \frac{-(t + \tau ) q}{ 2 \pi e N m }\right) + (t+ \tau + h) \log \left( \frac{-(t + \tau + h ) q_1}{ 2 \pi e N m_1 }\right) \\
& =  2 \tau \log \left(\frac{\tau}{ e \pi} \right) - 2 ( \tau+ h) \left\lbrace \log \left(\frac{\tau}{ e \pi} \right) + \log \left(1 + \frac{ h}{ \tau} \right)\right\rbrace +  h \log LN  +   2 \tau \log \left( \frac{q}{q_1}\right) - 2 h \log q_1  \\
& - ( t + \tau)  \left\lbrace \log \left(\frac{ q  m_1}{q_1  m} \right)  + \log \left(\frac{(t + \tau) }{t + \tau + h} \right)  \right\rbrace  + h \log \left( \frac{-(t + \tau + h ) q_1}{ 2 \pi e N m_1 }\right) \\
&= \tau \log \left(\frac{ q  m}{q_1  m_1} \right) - 2h \log \left(1 + \frac{ h}{ \tau} \right) +  h \log LN - 2 h \log q_1   - 2 h \log \tau + H(h). 
\end{align*} 
$H(h)$ is a function of $h$ defined appropriately. Substituting the above expression into equation \eqref{I 0}, we have that the integral over $\tau$ is given by 
\begin{align*}
\int_{|\tau| \ll (NK)^{1/2}/ C} G(\tau, h)  \tau^{-2 i h} \left(\frac{ q  m}{q_1  m_1} \right)^{2 \pi i \tau}   d \tau \ll_j \left( \frac{h}{ \frac{(NK)^{1/2}}{C} \log \left(\frac{ q  m}{q_1  m_1} \right)}\right)^j.
\end{align*} 
This bound is obtained using repeated integration by parts. We observe that integration over $\tau $ is negligibly small if 
\begin{align*}
& \frac{(NK)^{1/2}}{C} \left| \log \left(\frac{ q  m}{q_1  m_1} \right)\right| \ll t^\epsilon \Rightarrow \left| \log \left(\frac{ q  m}{q_1  m_1} \right)\right| \ll \frac{t^\epsilon C}{(NK)^{1/2}} \\
& \Rightarrow q_1 m_1 e^{  \frac{ -
A_0 t^\epsilon C}{(NK)^{1/2}}  } \ll q m \ll q_1 m_1 e^{ \frac{ A_0t^\epsilon C}{(NK)^{1/2}}  } \Rightarrow | q m - q_1 m_1| \ll \frac{ t^\epsilon C^2 m}{(NK)^{1/2}} , 
\end{align*}  
As in equation \eqref{E 1 C 0}, integrating over $\tau$ and $h$, we obtain $\mathfrak{I} (0) \ll t^{\epsilon} (Kt)^{-1}$. We record this result in the following lemma. 

\begin{lem}
Let $ \mathfrak{I} (n)$ be as given in equation \eqref{integral in second cauchy}.  Then $ \mathfrak{I} (0)$ is negligibly small except for 
\begin{equation*} 
| q m - q_1 m_1| \ll \frac{ t^\epsilon C^2 m}{(NK)^{1/2}}. 
\end{equation*}
 In the above range we have 
\begin{align}
\mathfrak{I} (0) \ll  \frac{t^{\epsilon}}{Kt} := E_1 (C, 0).
\end{align}
\end{lem}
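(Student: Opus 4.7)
The plan is to make rigorous the computation sketched in the paragraphs immediately preceding the lemma, namely to extract the pure $\tau$-oscillation from $\mathfrak{I}(0)$ and apply repeated integration by parts. First I would recall that by the second statement of Lemma~\ref{Fourier Mellin} applied to $U^\natural(0,-i(\tau-\tau'))$ with $r=0$ and $\beta=\tau-\tau'$, the kernel is negligibly small unless $|\tau-\tau'|\ll t^\epsilon$, so I may restrict to $\tau'=\tau+h$ with $|h|\ll t^\epsilon$ at the cost of an error $O_A(t^{-A})$. After this reduction, the double integral in \eqref{integral in second cauchy} is of size $K^{-2}$ times a double integral in $(\tau,h)$ on a box of area $(NK)^{1/2}C^{-1}\cdot t^\epsilon$.

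Next, I would apply Stirling's formula (Lemma~\ref{stirling}) to $\gamma(-1/2+i\tau,k)\overline{\gamma(-1/2+i\tau',k)}$ and absorb the polynomial factors and the nonoscillatory remainder into an amplitude $G(\tau,h)$, writing
\[
\mathfrak{I}(0)=\frac{c}{K^2}\iint G(\tau,h)\,e(F(\tau,h))\,d\tau\,dh+O_A(t^{-A}),
\]
with $F(\tau,h)$ computed as in the text. The key algebraic observation is the one already made: when one expands $\log((t+\tau+h)/(t+\tau))$, $\log((\tau+h)/\tau)$ and collects terms, the coefficient of $\tau$ in $F$ is exactly $\log(qm/q_1m_1)$, while the remaining $\tau$-dependence is hidden inside a slowly varying factor of the form $H(h)$ plus terms that, together with $G$, satisfy $\partial_\tau^j G\ll |\tau|^{-j}$ on the support.

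Now I would carry out repeated integration by parts in the $\tau$-variable against the oscillation $\tau\mapsto (qm/q_1m_1)^{2\pi i\tau}$. Since the amplitude (including the slow $\tau$-factor coming from the Stirling remainder and from $W_j(q,m,\tau)$) has logarithmic derivatives controlled by $1/|\tau|$, each integration by parts gains the factor
\[
\Bigl(\tfrac{(NK)^{1/2}}{C}\bigl|\log(qm/q_1m_1)\bigr|\Bigr)^{-1}.
\]
Iterating $j$ times shows $\mathfrak{I}(0)\ll_A t^{-A}$ unless $(NK)^{1/2}C^{-1}|\log(qm/q_1m_1)|\ll t^\epsilon$. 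Expanding $\log(qm/q_1m_1)=\log(1+(qm-q_1m_1)/(q_1m_1))$ and using $|q_1m_1|\asymp Cm$ converts this into the stated arithmetic condition $|qm-q_1m_1|\ll t^\epsilon C^2 m/(NK)^{1/2}$.

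Finally, inside that range I just reuse the trivial bound already derived immediately above \eqref{E 1 C 0}: the area of the $(\tau,h)$-box is $\ll (NK)^{1/2}C^{-1}t^\epsilon$, the Stirling-normalized gamma factors contribute $O(1)$, and $|W_j(q,m,\tau)|\ll t^\epsilon CK^{1/2}/(t^{1/2}N^{1/2})$, giving
\[
\mathfrak{I}(0)\ll \frac{1}{K^2}\cdot \frac{(NK)^{1/2}}{C}\cdot \Bigl(\frac{CK^{1/2}}{t^{1/2}N^{1/2}}\Bigr)^2\cdot t^\epsilon\ll \frac{t^\epsilon}{Kt},
\]
which is the claimed bound $E_1(C,0)$. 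The only delicate step is the bookkeeping in the integration by parts: one must verify that the nonoscillatory part of the Stirling expansion, together with $W_j$ and its conjugate, indeed has its $\tau$-derivatives bounded by $|\tau|^{-1}$ on the relevant range so that each derivative costs nothing essential—this is the main (and only) technical point.
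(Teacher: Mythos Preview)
Your proposal is correct and follows essentially the same route as the paper: restrict to $|\tau-\tau'|\ll t^\epsilon$ via Lemma~\ref{Fourier Mellin}, apply Stirling to isolate the phase $F(\tau,h)$, observe that the $\tau$-oscillation has frequency $\log(qm/q_1m_1)$, integrate by parts repeatedly in $\tau$ to force $|qm-q_1m_1|\ll t^\epsilon C^2 m/(NK)^{1/2}$, and then bound trivially as in \eqref{E 1 C 0}. The only point to be careful with, as you already note, is checking that the amplitude $G(\tau,h)$ (including the Stirling remainder $\Phi_\pm$ and both $W_j$ factors) satisfies $\partial_\tau^j G \ll |\tau|^{-j}$ on the support; the paper records exactly these derivative bounds just before and after \eqref{w j q m tau}.
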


Substituting this into equation \eqref{S0 and S1} we obtain that the contribution of $ \mathfrak{I} (0) $ in Lemma \ref{lemma s 1 j ncl} is given by 
\begin{align*}
  S_{1, j}^{+}(N, C, L, 0)= \frac{K}{N C^2}  \mathop{\sum \sum  \sum \sum }_{ \substack{ C<q, q_1 \leq 2C,\ (m, q)=1 (m_1, q_1) =1 \\ 1\leq |m|, |m_1| \ll qt^{1+\epsilon}/ N \\  a_1 q - a q_1  \equiv 0 \bmod q q_1 \\ | q m - q_1 m_1| \ll \frac{ t^\epsilon C^2 m}{(NK)^{1/2}} }  }   | \mathfrak{I} (0)| + O_A\left(t^{-A}\right), 
\end{align*} 

Since $(a, q)=(a_1, q_1)=1$, the congruence condition modulo $qq_1$ implies $q=q_1$ and $a=a_1$. Therefore $m\equiv m_1\bmod q$. The condition $|qm - q_1m_1|\ll t^\epsilon C^2m/(NK)^{1/2}$ becomes $|m-m_1|<Cm/(NK)^{1/2}$. Given the condition 
\begin{equation}
t< NK
\end{equation}
we see that $Cm/(NK)^{1/2}<C$. Therefore choosing $m$ fixes $m_1$. The above sum is therefore bounded by 
\begin{align} \label{diagonal}
\frac{K}{N C^2} \sum_{ C < q \leqslant 2C} \sum_{1 \leqslant m \ll q t/N} \mathop{\sum}_{\substack{1\leq m_1 \ll qt/N\\ a=a_1}} | \mathfrak{I} (0)| \ll \frac{K}{N C^2} \cdot C \cdot \frac{C t}{N} \cdot \frac{t^{\epsilon}}{Kt} \ll \frac{t^{\epsilon}}{N^2}.
\end{align}

We record this result in the following lemma.

\begin{lem} \label{lemma S0}
Let $ S_{1, j}^{+}(N, C, L, 0)  $ be as given in equation \eqref{S0 and S1}. We have 
\begin{align*}
S_{1, j}^{+}(N, C, L, 0) \ll \frac{t^{\epsilon}}{N^2},
\end{align*} for any $\epsilon >0$.
\end{lem}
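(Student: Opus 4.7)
The plan is to treat this as a counting bound by carefully exploiting the two restrictions on the quadruples $(q, q_1, m, m_1)$ that were identified in the preceding analysis: the modular congruence $a_1 q - a q_1 \equiv 0 \pmod{q q_1}$ coming from Poisson summation, and the near-equality $|qm - q_1 m_1| \ll t^\epsilon C^2 m / (NK)^{1/2}$ coming from stationarity of the $\tau$-integral in $\mathfrak{I}(0)$. Outside of this locus $\mathfrak{I}(0)$ is negligibly small, and inside it satisfies the pointwise bound $\mathfrak{I}(0) \ll t^\epsilon/(Kt)$ established in the preceding lemma.

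The first step is to collapse the congruence. Since $(a,q) = (a_1, q_1) = 1$ and both $a, a_1$ are restricted to the interval $(Q, q+Q]$, the divisibility $q q_1 \mid a_1 q - a q_1$ forces $q = q_1$ and $a = a_1$. In particular, pulling this back through the congruence $m \equiv \bar a \pmod q$ from \eqref{first poisson} yields $m \equiv m_1 \pmod q$. The second step is to exploit the near-equality condition in the presence of $q = q_1$: it reduces to $|m - m_1| \ll t^\epsilon C m/(NK)^{1/2}$, and since we are in the regime $t \ll NK$ and $C \leq Q = (N/K)^{1/2}$, the right-hand side is less than $C \asymp q$. Combined with the congruence $m \equiv m_1 \pmod q$, this forces $m_1 = m$ (up to an absorbable $O(t^\epsilon)$ ambiguity).

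With the diagonal $(q, q_1, m, m_1) = (q, q, m, m)$ isolated, the remaining counting is routine. Summing $q$ over $C < q \leq 2C$ gives a factor of $C$, summing $m$ in the range $1 \leq |m| \ll qt/N \asymp Ct/N$ gives a factor of $Ct/N$, and the value $m_1 = m$ contributes only $O(1)$. Multiplying by the prefactor $K/(NC^2)$ and the pointwise bound $|\mathfrak{I}(0)| \ll t^\epsilon/(Kt)$ yields
\[
S_{1,j}^{+}(N, C, L, 0) \ll \frac{K}{NC^2}\cdot C \cdot \frac{Ct}{N} \cdot \frac{t^\epsilon}{Kt} \ll \frac{t^\epsilon}{N^2},
\]
as claimed. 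The main conceptual obstacle is confirming that the near-equality bound is sharp enough (combined with the mod-$q$ condition) to actually collapse to a one-parameter family; this is the only place where the regime hypothesis $t \ll NK$ is used, and it is precisely why the stationary-phase refinement in Lemma \ref{exponential inte}(3) was needed earlier, since a cruder bound on $\mathfrak{I}(0)$ would leave $m_1$ with too many choices and destroy the saving.
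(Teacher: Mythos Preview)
Your proposal is correct and follows essentially the same approach as the paper: collapse the congruence $a_1 q - a q_1 \equiv 0 \pmod{qq_1}$ to $q=q_1$, $a=a_1$ (hence $m\equiv m_1\bmod q$), use the near-equality $|m-m_1|\ll t^\epsilon Cm/(NK)^{1/2}<C$ under the regime $t\ll NK$ to force $m_1=m$, and then plug in the pointwise bound $\mathfrak I(0)\ll t^\epsilon/(Kt)$ to get $\frac{K}{NC^2}\cdot C\cdot\frac{Ct}{N}\cdot\frac{t^\epsilon}{Kt}\ll t^\epsilon/N^2$. This matches the paper's argument line for line.
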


In the following subsection we consider the case when $n \neq 0$. 

\subsection{Analysis of \texorpdfstring{$\mathfrak{I}(n)$}{f} for \texorpdfstring{$n\neq0$}{g}}

We apply Lemma \ref{Fourier Mellin} and use $$\beta^{-3/2}U_1(\sigma, \beta/2\pi r)\ll O_{a,b,\sigma}(\min\{|\beta|^{-3/2},|r|^{-3/2}\})$$ 
to get
\begin{align} \label{U natural second}
U^\natural \left(\frac{nL}{ q q_1}, - i(\tau - \tau^\prime) \right) & = \frac{c_3}{( \tau^\prime - \tau)^{1/2}} U\left(  \frac{(  \tau^\prime - \tau) q q_1}{2 \pi n L}\right) \left(  \frac{( \tau^\prime - \tau) q q_1}{2 \pi e  n L}\right)^{ - i(\tau - \tau^\prime)  }  \notag\\
 & \hspace{2cm}+ O\left( \min \left\lbrace \frac{1}{| \tau - \tau^\prime|^{3/2}}, \frac{C^3}{(|n| L)^{3/2}} \right\rbrace \right), 
\end{align} 
where $c_3$ is an absolute constant which depends on the sign of $n$. We shall first estimate the contribution of the error term towards $\mathfrak{I}(n)$. Using $\gamma(s, k)\ll 1$, $u^{3/2} V(u) \ll 1$, $ W_j(q, m, \tau) \ll t^{-1/2} $ and that $\tau$ range is bounded by $J:= (NK)^{1/2} t^\epsilon/ C$ , we have the error contribution bounded by
\begin{align*}
\ll \frac{1}{K^2 t} \iint_{[J, {2J}]^2} \min \left\lbrace \frac{1}{| \tau - \tau^\prime|^{3/2}}, \frac{C^3}{(|n| L)^{3/2}} \right\rbrace d\tau  d\tau^\prime. 
\end{align*} In the first case, where the first term is smaller that the second, the contribution is bounded by
\begin{align*}
&\ll \frac{1}{K^2 t}  \iint_{ \substack{[J, {2J}]^2 \\ | \tau - \tau^\prime| > |n| L/C^2 }} \frac{1}{| \tau - \tau^\prime|^{3/2}}\ d\tau\  d\tau^\prime \ll \frac{t^\epsilon}{K^2 t}  \frac{C}{(|n| L)^{1/2}}  \iint_{ [J, {2J}]^2}\frac{1}{ | \tau - \tau^\prime|^{ 1-\epsilon}} d\tau  d\tau^\prime \\
&\ll \frac{t^\epsilon}{K^2 t}  \frac{C}{(|n| L)^{1/2}} J \ll \frac{t^\epsilon}{K^{3/2} t} \frac{N^{1/2}}{(|n| L)^{1/2}}. 
\end{align*} 
In the second case, when the second term is smaller, we have the bound on error term
\begin{align*}
&\ll \frac{1}{K^2 t}  \iint_{ \substack{[J, {2J}]^2 \\ | \tau - \tau^\prime| \leq |n| L/C^2 }}  \frac{C^3}{(|n| L)^{3/2}} d\tau  d\tau^\prime \ll \frac{1}{K^2 t} \frac{C}{(|n| L)^{1/2} } J \ll \frac{t^\epsilon}{K^{3/2} t} \frac{N^{1/2}}{(|n| L)^{1/2}}. 
\end{align*} 
For $n\neq 0$ we set the error term
\begin{equation} \label{E 1 C n }
E_1(C, n) = \frac{t^\epsilon}{K^{3/2} t} \frac{N^{1/2}}{(|n| L)^{1/2}}. 
\end{equation} 

Next we consider the contribution of the main term of equation \eqref{U natural second}.  By Stirling's approximation (Lemma \ref{stirling}) we have 
\[
  \gamma(-\frac{1}{2} + i \tau, k) = \left( \frac{|\tau|}{e \pi}\right)^{2 i \tau} \Phi_{\pm} (\tau) \ \ \ \ \textrm{with} \ \ \ \ \Phi_{\pm}^\prime (\tau) \ll \frac{1}{|\tau|}. 
\]
By Fourier inversion formula, we have 
\begin{align} \label{Fourier inversion}
\left(\frac{2 \pi nL}{ (\tau - \tau^\prime) q q_1} \right)^{1/2} U\left(\frac{(\tau - \tau^\prime) q q_1}{ 2 \pi nL} \right) = \int_{\mathbb{R}} U^\natural \left(z, \frac{1}{2} \right) e \left(\frac{(\tau - \tau^\prime) q q_1}{ 2 \pi nL}  z\right) dz.
\end{align}
From equations \eqref{integral in second cauchy}, \eqref{U natural second}, \eqref{E 1 C n }, and \eqref{Fourier inversion}, we have 
 \begin{align} \label{final double exp}
 \mathfrak{I}(n) = \frac{c_4}{K^2} \left(\frac{q q_1}{  |n|L} \right)^{1/2}  \int_{\mathbb{R}} U^\natural \left(z, \frac{1}{2} \right) \iint_{\mathbb{R}^2} G(\tau , \tau^\prime ) e(F(\tau, \tau^\prime)) d\tau d \tau^\prime dz + O( E_1(C, n))
 \end{align}  
with some absloute constant $c_4$,  
\begin{align*}
2 \pi F(\tau, \tau^\prime) = &2 \tau \log \left( \frac{\tau}{e \pi}\right) - 2 \tau^\prime \log \left( \frac{\tau^\prime}{e \pi}\right) - (\tau - \tau^\prime ) \log LN + 2 \tau \log - 2 \tau^\prime \log q_1  \\
& - (t + \tau) \log \left(- \frac{(t+\tau)q}{2 \pi e Nm} \right) + (t + \tau^\prime) \log \left(- \frac{(t+\tau^\prime)q_1}{2 \pi e N m_1} \right)  \\
&- (\tau - \tau^\prime)  \log \left(\frac{( \tau^\prime - \tau) q q_1}{ 2 \pi e nL} \right)  +  \frac{( \tau^\prime - \tau) q q_1}{ 2 \pi nL}  z, 
 \end{align*} 
 and
 \begin{align*}
 G(\tau , \tau^\prime ) = \Phi_+(\tau) \overline{ \Phi_+(\tau^\prime)} W_j(q, m, \tau) \overline{W_j(q_1, m_1, \tau^\prime)}. 
 \end{align*} 
 Differentiating with respect to $\tau$ and $ \tau^\prime $, 
 \begin{align*}
 2 \pi \frac{\partial^2}{\partial\tau^2}  F(\tau, \tau^\prime) = \frac{2}{\tau} - \frac{1}{t +\tau} + 
 \frac{1}{\tau^\prime - \tau}, \ \ \ 2 \pi \frac{\partial^2}{\partial\tau^{\prime 2}}  F(\tau, \tau^\prime) =  - \frac{2}{\tau^\prime} + \frac{1}{t +\tau^\prime} + 
 \frac{1}{\tau^\prime - \tau}
 \end{align*} and 
 \begin{align*}
 2 \pi \frac{\partial^2}{\partial\tau \partial\tau^{\prime } }  F(\tau, \tau^\prime) = - \frac{1}{\tau^\prime - \tau}.
 \end{align*} 
 Now using the fact that Supp $ W_j \subset [j, 4j/3]$ and $j \ll t^{1-\epsilon}$, by an explicit computation we have 
 \[
 4 \pi^2 \left[ \frac{\partial^2}{\partial\tau^2}   \frac{\partial^2}{\partial\tau^{\prime 2}}  -  \left(  \frac{\partial^2}{\partial\tau \partial\tau^{\prime } }  \right)^2  \right]  F(\tau, \tau^\prime)  = \frac{-4}{ \tau \tau^\prime} + O\left( \frac{1}{tj}\right). 
 \]
Our goal is to apply Lemma \ref{double expo sum}. For this, we first compute the total variation of function $ G(\tau , \tau^\prime )$ defined as in equation \eqref{total variation}. Using $ \Phi^\prime (\tau) \ll |\tau|^{-1}$ and $ \frac{\partial}{\partial \tau} W_j(q, m, \tau) \ll t^{-1/2} |\tau|^{-1}$, we have $var(G) \ll t^{-1 +\epsilon}$. We apply Lemma  \ref{double expo sum} with $r_1 = \tau^{-1/2} \asymp j^{-1/2}$ and $r_2 = \tau^{\prime -1/2} \asymp j^{-1/2}$, we observe that the double integral is bounded by 
\[
\frac{t^{-1 +\epsilon}}{j^{-1/2} j^{-1/2}} \ll t^{-1 +\epsilon} j. 
\] 
Substituting this bound for the double integral, estimating the integral over $z$ trivially and using $j\ll (NK)^{1/2}/ C$, we have that the total contribution of leading term is bounded by 
\[
\frac{t^\epsilon}{K^2} \frac{C}{(|n|L)^{1/2}} \frac{(NK)^{1/2}}{C} \frac{1}{t} \ll E_1(C, n). 
\]
 We summarize the contribution of the leading term and the error term in the following lemma.
 
 \begin{lem}
Let $n \neq 0$. For  any $\epsilon >0$,  we have 

\begin{align*}
 \mathfrak{I} (n) \ll \frac{t^\epsilon}{K^{3/2} t} \frac{N^{1/2}}{(|n| L)^{1/2}}.
\end{align*}

 \end{lem}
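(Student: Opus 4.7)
The plan is to start from the explicit formula for $\mathfrak{I}(n)$ in equation \eqref{integral in second cauchy}, and separate it into a ``main'' contribution from the stationary phase expansion of $U^\natural$ and an error contribution. Applying Lemma \ref{Fourier Mellin} with $r = nL/qq_1$ and $\beta = \tau^\prime - \tau$ yields the expansion in \eqref{U natural second}: a leading term carrying an explicit oscillatory factor of size $|\tau-\tau^\prime|^{-1/2}$, plus an error of size $\min\{|\tau-\tau^\prime|^{-3/2}, C^3/(|n|L)^{3/2}\}$. The goal is to show that both pieces are controlled by $E_1(C,n) = t^\epsilon N^{1/2}/(K^{3/2}t(|n|L)^{1/2})$.

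For the error contribution, I would use the trivial bounds $\gamma(s,k)\ll 1$, $u^{3/2}V(u)\ll 1$, and $W_j(q,m,\tau)\ll t^{-1/2}$ proven earlier, so that after pulling out $K^{-2}$ the double $(\tau,\tau^\prime)$-integral over $[J,2J]^2$ (with $J=(NK)^{1/2}t^\epsilon/C$) of the minimum remains. Splitting $[J,2J]^2$ into the two regions according to which term in the minimum dominates and integrating each separately, both contributions come out to $\ll t^\epsilon C J/(|n|L)^{1/2}$, which simplifies to $E_1(C,n)$ once $J$ is inserted.

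For the main term, I would use the Fourier inversion identity \eqref{Fourier inversion} to linearize the $U$-factor and use Stirling's formula (Lemma \ref{stirling}) to write $\gamma(-1/2+i\tau,k) = (|\tau|/e\pi)^{2i\tau}\Phi_\pm(\tau)$ with $\Phi_\pm^\prime(\tau)\ll |\tau|^{-1}$. After these reductions $\mathfrak{I}(n)$ takes the shape in \eqref{final double exp}: a one-dimensional $z$-integral (harmless thanks to the decay of $U^\natural(z,1/2)$) multiplied by a double oscillatory integral $\iint G(\tau,\tau^\prime)e(F(\tau,\tau^\prime))\,d\tau\,d\tau^\prime$, where $G$ bundles the slowly varying factors $\Phi_\pm$ and $W_j$. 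This is set up precisely for Lemma \ref{double expo sum}.

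The decisive step is the Hessian analysis of $F$. A direct differentiation gives
\[
2\pi\tfrac{\partial^2 F}{\partial\tau^2} = \tfrac{2}{\tau}-\tfrac{1}{t+\tau}+\tfrac{1}{\tau^\prime-\tau},\quad 2\pi\tfrac{\partial^2 F}{\partial\tau^{\prime 2}} = -\tfrac{2}{\tau^\prime}+\tfrac{1}{t+\tau^\prime}+\tfrac{1}{\tau^\prime-\tau},\quad 2\pi\tfrac{\partial^2 F}{\partial\tau\partial\tau^\prime}=-\tfrac{1}{\tau^\prime-\tau}.
\]
The main obstacle is verifying that the Hessian determinant is genuinely of size $1/(\tau\tau^\prime)$: the potentially dangerous $(\tau^\prime-\tau)^{-2}$ contributions from the off-diagonal square and the two diagonal entries cancel algebraically, leaving the dominant $-4/\tau\tau^\prime$ up to an error $O(1/(tj))$ that is negligible since $j\ll t^{1-\epsilon}$. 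This grants $r_1\asymp r_2\asymp j^{-1/2}$ in Lemma \ref{double expo sum}. Using $\Phi_\pm^\prime(\tau)\ll |\tau|^{-1}$ and $\partial_\tau W_j(q,m,\tau)\ll t^{-1/2}|\tau|^{-1}$ gives $\mathrm{var}(G)\ll t^{-1+\epsilon}$. Lemma \ref{double expo sum} then bounds the double integral by $t^{-1+\epsilon}j$, and since $j\ll (NK)^{1/2}/C$, the whole main-term contribution collapses to $\ll E_1(C,n)$, matching the error and giving the claimed bound.
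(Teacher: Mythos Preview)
Your proposal is correct and follows essentially the same approach as the paper: splitting $U^\natural$ via Lemma \ref{Fourier Mellin} into a main term plus the $\min\{|\tau-\tau'|^{-3/2}, C^3/(|n|L)^{3/2}\}$ error, handling the error by a direct case split over $[J,2J]^2$, and treating the main term via Fourier inversion, Stirling, the Hessian cancellation leaving $-4/\tau\tau'+O(1/(tj))$, and Lemma \ref{double expo sum} with $r_1\asymp r_2\asymp j^{-1/2}$ and $\mathrm{var}(G)\ll t^{-1+\epsilon}$. The paper's argument matches yours step for step.
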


Substituting the bound for $\mathfrak{I}(n)$ (for $n \neq 0$) in Lemma \ref{lemma s 1 j ncl} and reasoning exactly as we did to bound \eqref{counting}, we obtain that $S_{1, j}^{+}(N, C, L, 1)$ is bounded above by
%\begin{align*}
%S_{1, j}^{+}(N, C, L, 1) & \ll \frac{K}{N C^2}  \mathop{\sum \sum }_{ \substack{ C<q \leq 2C,\ (m, q)=1 \\ 1\leq |m| \ll qt^{1+\epsilon}/ N}}   \mathop{\sum \sum }_{ \substack{ C<q_1 \leq 2C,\ (m_1, q_1)=1 \\ 1\leq |m_1| \ll q_1t^{1+\epsilon}/ N}}   \sum_{\substack{ 0 \neq n \ll  C(NK)^{1/2} t^\epsilon/L \\  m_1 q - m q_1 + n \equiv 0 (\textrm{mod} \ q q_1)}}   | E_1(C, n)|
%\end{align*}  The contribution of $  E_1(C, n)$ is bounded above by
\begin{align*}
 & \ll  \frac{K}{N C^2}  \mathop{\sum \sum }_{ \substack{ C<q \leq 2C,\ (m, q)=1 \\ 1\leq |m| \ll qt^{1+\epsilon}/ N}}   \mathop{\sum \sum }_{ \substack{ C<q_1 \leq 2C,\ (m_1, q_1)=1 \\ 1\leq |m_1| \ll q_1t^{1+\epsilon}/ N}}   \sum_{\substack{n \ll  C(NK)^{1/2} t^\epsilon/L \\  a_1 q - a q_1 + n \equiv 0 \bmod \ q q_1}}  \frac{t^\epsilon}{K^{3/2} t} \frac{N^{1/2}}{(|n| L)^{1/2}} \\
&\ll \frac{K}{N C^2}  \frac{C q t}{N} \frac{q_1 t}{N}  \frac{1}{q q_1}  \frac{t^\epsilon}{K^{3/2} t} \frac{N^{1/2}}{ L^{1/2}}  \sum_{n \ll  C(NK)^{1/2} t^\epsilon/L }  \frac{1}{n^{1/2}} \ll \frac{t^{1+\epsilon}}{L N^{9/4} K^{1/4} C^{1/2}}.
\end{align*} 
%The contribution of $  E_1(C, 0)$ is bounded above by ($m$ and $m_1$ are determined by the congruence equation $ m_1 q - m q_1  \equiv 0 (\textrm{mod} \ q q_1) $)
%
%\begin{align*}
%& \frac{K}{N C^2}  \mathop{\sum \sum }_{ \substack{ C<q \leq 2C,\ (m, q)=1 \\ 1\leq |m| \ll qt^{1+\epsilon}/ N}}   \mathop{\sum \sum }_{ \substack{ C<q_1 \leq 2C,\ (m_1, q_1)=1 \\ 1\leq |m_1| \ll q_1t^{1+\epsilon}/ N}}   \frac{1}{q q_1} \frac{t^\epsilon}{Kt} \ll \frac{t^{1+\epsilon}}{N^3}.
%\end{align*} 
We record this bound for $ S_{1, j}^{+}(N, C, L) $ in the following lemma.

\begin{lem} \label{lemma S1}
Let $   S_{1, j}^{+}(N, C, L, 1) $ be as given in equation \eqref{S0 and S1}. We have
\[
 S_{1, j}^{+}(N, C, L, 1) \ll \frac{t^{1+\epsilon}}{L N^{9/4} K^{1/4} C^{1/2}}.
\]

\end{lem}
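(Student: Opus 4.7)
The plan is to substitute the bound on $\mathfrak{I}(n)$ from the previous lemma into the expression for $S_{1,j}^{+}(N,C,L,1)$ in \eqref{S0 and S1}, and then to reduce matters to a counting problem governed by the congruence $a_1 q - a q_1 + n \equiv 0 \pmod{q q_1}$. Using $\mathfrak{I}(n) \ll t^{\epsilon} N^{1/2}/(K^{3/2}t (|n|L)^{1/2})$, the bound becomes
\[
S_{1,j}^{+}(N,C,L,1) \ll \frac{K}{NC^{2}} \cdot \frac{t^{\epsilon}N^{1/2}}{K^{3/2}t L^{1/2}} \sum_{\substack{0 \neq n \ll C(NK)^{1/2}t^{\epsilon}/L}} \frac{\mathcal{N}(n)}{|n|^{1/2}},
\]
where $\mathcal{N}(n)$ counts quadruples $(q,q_{1},m,m_{1})$ with $q,q_{1}\asymp C$, $(m,q)=(m_{1},q_{1})=1$, $|m|\ll qt/N$, $|m_{1}|\ll q_{1}t/N$, and the congruence holding. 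The task splits cleanly: estimate $\mathcal{N}(n)$, then sum in $n$.

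For the counting, the key observation is that $a$ is determined by $m\bmod q$ through $m\equiv \bar{a}\pmod{q}$, and similarly $a_{1}$ by $m_{1}\bmod q_{1}$. Reducing the congruence modulo $q$ gives $a q_{1} \equiv n \pmod{q}$, which (for generic $(q,q_{1})=1$) fixes $a \bmod q$, hence $m$ to a single residue class modulo $q$; reducing modulo $q_{1}$ fixes $m_{1}$ modulo $q_{1}$. Thus for each triple $(n,q,q_{1})$ the number of admissible $m$ is $\ll t/N$, and the same for $m_{1}$. The gcd strata $(q,q_{1})=d$ can be absorbed into an extra $t^{\epsilon}$ by the usual divisor argument. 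Summing $q,q_{1}\asymp C$, together with the bound $\sum_{|n|\ll X}|n|^{-1/2}\ll X^{1/2}$ with $X=C(NK)^{1/2}/L$, yields an estimate of shape $C^{\alpha}t^{2+\epsilon}(NK)^{1/4}/(N^{2}L^{1/2})$ for the inner sum.

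Assembling this with the prefactor $K/(NC^{2})$ and the common factor $t^{\epsilon}N^{1/2}/(K^{3/2}tL^{1/2})$ gives the desired bound $\ll t^{1+\epsilon}/(L N^{9/4}K^{1/4}C^{1/2})$, provided one extracts the correct additional saving from the congruence (mirroring what was done in the $n=0$ analysis through the secondary constraint $|qm-q_{1}m_{1}|\ll t^{\epsilon}C^{2}m/(NK)^{1/2}$). The main obstacle is precisely this bookkeeping of the congruence: one must check that after fixing the residue classes of $a$ and $a_{1}$, the remaining Diophantine relation between $(m,m_{1},n)$ does not admit too many solutions, and in particular that the length of the $n$-interval combined with the constraint $|a_{1}q-aq_{1}|\ll C(N/K)^{1/2}$ leaves $O(1)$ choices for the integer $\ell$ in $a_{1}q-aq_{1}+n=\ell qq_{1}$. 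Once this uniqueness is established, the remaining steps are mechanical substitution and the divisor-sum estimate $\sum_{|n|\ll X}\tau(n)|n|^{-1/2}\ll X^{1/2}t^{\epsilon}$.
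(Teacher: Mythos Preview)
Your substitution of the bound $\mathfrak{I}(n)\ll t^{\epsilon}N^{1/2}/(K^{3/2}t(|n|L)^{1/2})$ and the reduction to the counting problem are correct, and your observation that fixing $(n,q,q_1)$ pins down $a\bmod q$ and $a_1\bmod q_1$ (hence $m\bmod q$ and $m_1\bmod q_1$) matches exactly what the paper does. The gap is in the extra factor of $C$ you need to reach the stated bound.

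If you sum both $q$ and $q_1$ freely over $(C,2C]$, your count gives $\mathcal{N}(n)\ll C^2(t/N)^2 t^{\epsilon}$, and plugging this through yields
\[
S_{1,j}^{+}(N,C,L,1)\ \ll\ \frac{t^{1+\epsilon}\,C^{1/2}}{L\,N^{9/4}K^{1/4}},
\]
which is a factor $C$ too large. You recognize this and propose two remedies, but neither works. First, the constraint $|qm-q_1m_1|\ll t^{\epsilon}C^2 m/(NK)^{1/2}$ is \emph{not} a consequence of the congruence; it arose in the $n=0$ case from integrating by parts in $\tau$ inside $\mathfrak{I}(0)$, and the analysis of $\mathfrak{I}(n)$ for $n\neq 0$ (via Lemma~\ref{double expo sum}) produces no such localization. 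Second, your claim that $\ell$ in $a_1q-aq_1+n=\ell qq_1$ is $O(1)$ fails for small $C$: one has $|\ell|\ll Q/C+(NK)^{1/2}/(CL)$, and since $C$ ranges dyadically down to $1$ this can be as large as $Q$.

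The saving the paper actually uses (it refers back to the discussion around \eqref{counting}) is that $q_1$ is essentially determined once $q$ and $n$ are fixed, costing only $t^{\epsilon}$. Concretely, after fixing $q,n,q_1$ the congruences determine $a$ and $a_1$ uniquely in their respective intervals of length $q,q_1$, saving $q$ and $q_1$ in the $m,m_1$-sums; and the divisor--type relation $q_1\mid(n+a_1q)$ (equivalently $q_1\equiv \bar a\,n\pmod q$ once $a$ is pinned down) restricts $q_1$ to $O(t^{\epsilon})$ values. With this, the count drops to $C\cdot t^{\epsilon}\cdot (t/N)^2$, and after the $n$-sum $\sum_{|n|\ll C(NK)^{1/2}/L}|n|^{-1/2}\ll (C(NK)^{1/2}/L)^{1/2}$ you recover the stated $C^{-1/2}$. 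Replace your two proposed fixes with this $q_1$-rigidity argument and the rest of your write-up goes through.
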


Substituting the bounds of Lemma \ref{lemma S0} and Lemma \ref{lemma S1} into Lemma \ref{lemma s 1 j ncl}, we obtain the following lemma

\begin{lem} 
Let $ S_{1, j}^{+}(N, C, L) $ be as given in  equation  \eqref{S1j plus nc}. We have

\[
S_{1, j}^{+}(N, C, L) \ll \frac{t^{1+\epsilon}}{L N^{9/4} K^{1/4} C^{1/2}} +  \frac{t^{\epsilon}}{N^2}.
\] 

\end{lem}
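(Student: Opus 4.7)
The plan is essentially a bookkeeping step: the lemma is an immediate consequence of the decomposition established in Lemma \ref{lemma s 1 j ncl} together with the separate bounds already obtained for the two pieces.

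First, I would recall from Lemma \ref{lemma s 1 j ncl} the decomposition
\[
S_{1, j}^{+}(N, C, L) = S_{1, j}^{+}(N, C, L, 0) + S_{1, j}^{+}(N, C, L, 1) + O_A(t^{-A}),
\]
where the first summand collects the contribution of the diagonal term $n=0$ in \eqref{S0 and S1} and the second summand collects the contribution of all $n\neq 0$ in the range $|n| \ll C(NK)^{1/2} t^{\epsilon}/L$. The plan is then simply to invoke the two previously established bounds on these summands and combine them by the triangle inequality.

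Next, I would apply Lemma \ref{lemma S0}, which asserts $S_{1, j}^{+}(N, C, L, 0) \ll t^{\epsilon}/N^{2}$, and Lemma \ref{lemma S1}, which gives $S_{1, j}^{+}(N, C, L, 1) \ll t^{1+\epsilon}/(L N^{9/4} K^{1/4} C^{1/2})$. Adding the two contributions and observing that the negligible $O_A(t^{-A})$ error is absorbed into either estimate by taking $A$ sufficiently large, one obtains
\[
S_{1, j}^{+}(N, C, L) \ll \frac{t^{1+\epsilon}}{L N^{9/4} K^{1/4} C^{1/2}} +  \frac{t^{\epsilon}}{N^2},
\]
which is exactly the desired bound.

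Since the hard analytic work (the Fourier--Mellin expansion of $U^{\natural}$ in \eqref{U natural second}, the two-variable stationary phase analysis via Lemma \ref{double expo sum}, and the counting argument in the congruence $a_1 q - a q_1 + n \equiv 0 \pmod{qq_1}$) has already been carried out in deriving Lemmas \ref{lemma S0} and \ref{lemma S1}, there is no genuine obstacle at this stage. The only point worth checking carefully is that the negligible error $O_A(t^{-A})$ from Lemma \ref{lemma s 1 j ncl} is indeed harmless relative to both $t^{\epsilon}/N^2$ and $t^{1+\epsilon}/(LN^{9/4}K^{1/4}C^{1/2})$, which is immediate upon choosing $A$ large compared to the fixed exponents of $N$, $K$, $C$, and $L$ (each of which is polynomially bounded in $t$).
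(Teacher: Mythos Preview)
Your proposal is correct and matches the paper's approach exactly: the paper simply states that substituting the bounds of Lemma \ref{lemma S0} and Lemma \ref{lemma S1} into the decomposition of Lemma \ref{lemma s 1 j ncl} yields the result. There is nothing to add.
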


Substituting the bound for $ S_{1, j}^{+}(N, C, L) $  in the equation \eqref{S1j plus nc},
\begin{align} \label{S1 plus N C}
S_{1, j}^{+}(N, C)  &\ll  N^{1/2} K   \sum_{\substack{  L \ll K \\ \text{dyadic}}} L^{1/2}  \left(  \frac{ t^\epsilon t^{1/2} }{L^{1/2} N^{9/8} K^{1/8}C^{1/4}} + \frac{t^\epsilon}{N} \right)   
\ll t^\epsilon N^{1/2} K  \left( \frac{t^{1/2} }{N^{9/8} K^{1/8} C^{1/4}}   + \frac{ K^{1/2}}{N}  \right).
\end{align} 

Substituting the bound for $ S_{1, j}^{+}(N, C)$ from equation \eqref{S1 plus N C},
\begin{align*}
S_1^+(N, C) & \ll t^\epsilon N^{1/2} K  \left( \frac{t^{1/2}}{N^{9/8} K^{1/8} C^{1/4}}   + \frac{ K^{1/2}}{N}  \right).
\end{align*}

Substituting the bound for $ S_1^{+}(N, C)$ and using $ C\ll N^{1/2}/K^{1/2}$, we have 
\begin{align}\label{S1}
\frac{S_1^{+}(N)}{N^{1/2}} \ll \frac{N^{1/2}}{K}  \sum_{\substack{1\leq C \leq Q \\ \text{dyadic}}} S_1^{+}(N, C) \ll   t^{1/2+\epsilon} \left( \frac{1}{N^{1/8}K^{1/8}} + \frac{K^{1/2}}{t^{1/2}}\right). 
\end{align} 
 
Combining the bounds \eqref{S2} and \eqref{S1}, we get Proposition \ref{main prop}.

{\bf Acknowledgement:} KA would like to thank Prof. Ritabrata Munshi for introducing him to the problem and Prof. Roman Holowinsky for helpful discussions. SKS would like to thank Prof. Ritabrata Munshi for  helpful discussions.  He would also like to thank Prof.  Satadal Ganguly and Ratnadeep Acharya for their constant encouragement, and Stat-Math unit, Indian Statistical Institute, Kolkata for the wonderful academic atmosphere. During the work, SKS was supported by the Department of Atomic Energy, Government of India, NBHM post doctoral fellowship no: 2/40(15)/2016/R$\&$D-II/5765.

\bibliographystyle{amsplain}
\bibliography{ref}

{}
\noindent
{\sc Address of the authors: \ Stat-Math Unit, \\
Indian Statistical Institute, \\
203 BT Road,  Kolkata-700108, INDIA.}\\
{\tt Email address : skumar.bhu12@gmail.com}
\end{document}